\documentclass[11pt]{amsart}

\usepackage[margin=1.15in]{geometry}
\usepackage{amsmath,amssymb,amsthm,mathtools}
\usepackage{booktabs}
\usepackage{array}
\usepackage{longtable}
\usepackage{enumitem}
\usepackage{xcolor}
\usepackage{listings}
\usepackage{caption}
\usepackage{url}
\usepackage{seqsplit}
\usepackage{graphicx}
\usepackage{hyperref}

\hypersetup{
  colorlinks=true,
  linkcolor=blue!55!black,
  urlcolor=blue!55!black,
  citecolor=blue!55!black
}

\lstdefinestyle{pythonsmall}{
  language=Python,
  basicstyle=\ttfamily\small,
  breaklines=true,
  columns=fullflexible,
  frame=single,
  rulecolor=\color{black!20},
  backgroundcolor=\color{black!2},
  showstringspaces=false
}

\theoremstyle{plain}
\newtheorem{theorem}{Theorem}[section]
\newtheorem{lemma}[theorem]{Lemma}
\newtheorem{proposition}[theorem]{Proposition}
\newtheorem{corollary}[theorem]{Corollary}

\newtheorem{hypothesis}[theorem]{Hypothesis}
\theoremstyle{definition}
\newtheorem{definition}[theorem]{Definition}
\newtheorem{example}[theorem]{Example}
\newtheorem{problem}[theorem]{Problem}
\theoremstyle{remark}
\newtheorem{remark}[theorem]{Remark}

\newcommand{\D}{\Delta}
\newcommand{\dd}{\partial}
\newcommand{\Z}{\mathbb Z}

\title[Port Fillings for Primary Pseudoperfect Numbers]{Port Fillings for Primary Pseudoperfect Numbers}

\author{Han Wang}
\email{han@hanziwww.com}
\date{May 2026}

\begin{document}

\begin{abstract}
A squarefree integer $n$ is a primary pseudoperfect number if $1/n+\sum_{p\mid n}1/p=1$, equivalently $\partial(n)=n-1$ for the arithmetic derivative.  We introduce a local formalism for this equation.  A residual equation is represented by a port $(R,c)$, and a squarefree integer $B$ fills the port when $\Delta_{R,c}(B):=cB-R\partial(B)=1$.  The composition law $\Delta_{R,c}(AB)=\Delta_{RA,\Delta_{R,c}(A)}(B)$ records the inheritance mechanism and separates it from fillings that are primitive relative to a fixed port.

The port $H=(113322,797)$, arising from the prefix $2\cdot3\cdot11\cdot17\cdot101$, has two port-primitive fillings, $149\cdot3109$ and $157\cdot1979\cdot10093\cdot16879$.  The first gives the known primary pseudoperfect number $52495396602$.  The second gives $N_9=5998279018951962402$, verified by $1+\sum_{p\mid N_9}N_9/p=N_9$.  Moreover $N_9+1=5998279018951962403$ is prime, with a short Pocklington certificate, and therefore $N_{10}=N_9(N_9+1)=35979351189199316534587473905773572006$ is another primary pseudoperfect number.

We also prove a conditional infinitude criterion.  The condition is a five-splitting Hardy--Littlewood--Bateman--Horn type prime-points hypothesis for one explicit family of terminal hypersurfaces.  It is not a theorem and is not a formal consequence of the classical one-variable Bateman--Horn conjecture.  Under this hypothesis, a terminal prime $p$ in a port $(R,c)$ with $cp-R=1$ can be replaced recursively by five larger primes on the hypersurface $c x_1x_2x_3x_4x_5-R\sum_i\prod_{j\ne i}x_j=1$, producing infinitely many primary pseudoperfect numbers.  Finally, we give a discriminant criterion for the last two primes of a filling; the criterion reduces the final step to deciding whether an explicit quadratic polynomial takes square values on a finite interval.
\end{abstract}

\maketitle

\tableofcontents

\section{Introduction}

Erd\H{o}s asked whether there are infinitely many finite sets of distinct primes $p_1<\cdots<p_k$ and positive integers $m$ such that
\begin{equation}\label{eq:erdos-original}
        \frac1{p_1}+\cdots+\frac1{p_k}=1-\frac1m.
\end{equation}
This is Erd\H{o}s Problems \#313~\cite{ErdosProblems313}.  As recalled below, it is equivalent to the infinitude of primary pseudoperfect numbers.  Following Butske, Jaje, and Mayernik~\cite{ButskeJajeMayernik}, a squarefree positive integer $n$ is a \emph{primary pseudoperfect number} if
\begin{equation}\label{eq:ppn-def}
        \frac1n+\sum_{p\mid n}\frac1p=1,
\end{equation}
where the sum is over the prime divisors of $n$.

OEIS A054377~\cite{OEISA054377} records the initial values
\[
\begin{array}{c}
        2,\ 6,\ 42,\ 1806,\ 47058,\\[2pt]
        2214502422,\ 52495396602.
\end{array}
\]
and the eight-prime-factor example
\[
        \text{\seqsplit{8490421583559688410706771261086}}.
\]
Butske, Jaje, and Mayernik proved by computation that for each $r\le 8$ there is exactly one primary pseudoperfect number with $r$ distinct prime factors~\cite{ButskeJajeMayernik}.  This result gives a useful baseline, but it does not address later layers or the infinitude problem.

This paper uses a local language for residual equations.  A \emph{port} is a pair $(R,c)$, and a squarefree integer $B$ fills it if
\[
        \Delta_{R,c}(B):=cB-R\partial(B)=1.
\]
The corresponding reciprocal form is
\[
        \sum_{q\mid B}\frac1q+\frac1{RB}=\frac cR.
\]
The product rule for the arithmetic derivative gives the composition law for ports.  This law separates fillings inherited from smaller primary pseudoperfect numbers from fillings that are primitive relative to the fixed residual equation.

The unconditional results of the paper are as follows.
\begin{enumerate}[label=(\roman*)]
    \item We develop the port formalism and prove its composition law.
    \item We isolate the port $H=(113322,797)$ arising from the prefix $2\cdot3\cdot11\cdot17\cdot101$.
    \item We show that $H$ has two port-primitive fillings of different lengths: $149\cdot3109$ and $157\cdot1979\cdot10093\cdot16879$.
    \item The second filling gives the nine-prime-factor example $N_9$.  The primality of $N_9+1$, certified below, then gives the ten-prime-factor example $N_{10}$.
    \item We give a discriminant criterion for the final two primes of any port filling.
\end{enumerate}

The paper also contains a conditional infinitude criterion.  The hypothesis is a five-splitting Hardy--Littlewood--Bateman--Horn type prime-points hypothesis for the explicit hypersurfaces arising from terminal ports.  It is a natural local-to-global prime-points hypothesis, but it is not the classical one-variable Bateman--Horn conjecture and no such implication is asserted.  Under that hypothesis, terminal primes can be split recursively into five larger primes, producing infinitely many primary pseudoperfect numbers.

Finite search data for the next layer of the port $H$ are placed in the appendices.  They document the computational direction and are not used in the proof of the examples constructed here or in the conditional infinitude theorem.

\subsection*{Relation with earlier work}
The port formalism is a coordinate system for several familiar features of the problem.  The congruence $q\mid R(B/q)+1$ is the port form of the divisibility conditions in Zn\'am-type problems.  The identity $\sum_{q\mid B}1/q+1/(RB)=c/R$ is an Egyptian-fraction residual equation.  The composition law for $\Delta_{R,c}$ is the local form of the inheritance equation for primary pseudoperfect numbers.  Thus ports do not replace the earlier viewpoints; they put the Zn\'am congruences, reciprocal search trees, and inheritance in a single notation.

The arithmetic-derivative formulation also places primary pseudoperfect numbers next to Giuga-type equations.  Grau and Oller-Marc\'en characterized Giuga numbers by equations of the form $n'=an+1$ involving the arithmetic derivative~\cite{GrauOllerGiugaDerivative}.  Later, Grau, Oller-Marc\'en, and Sadornil introduced $\mu$-Sondow numbers; their framework includes weak primary pseudoperfect numbers in the case $\mu=1$ and connects these conditions with the Erd\H{o}s--Moser equation~\cite{GrauOllerSadornilMuSondow}.  Related work of Grau, Oller-Marc\'en, and Sondow on the congruence $1^m+2^m+\cdots+m^m\equiv n\pmod{m}$ uses the then-known primary pseudoperfect numbers as the quotients $Q=m/n$ arising in that problem~\cite{GrauOllerSondowCongruence}.

\section{From Erd\H{o}s' equation to primary pseudoperfect numbers}

We first recall the elementary reduction from~\eqref{eq:erdos-original} to~\eqref{eq:ppn-def}.  It is included because it is the starting point for the entire framework.

\begin{proposition}\label{prop:erdos-equivalence}
Let $p_1<\cdots<p_k$ be distinct primes and suppose
\[
        \sum_{i=1}^k\frac1{p_i}=1-\frac1m
\]
for some positive integer~$m$.  Then
\[
        m=p_1p_2\cdots p_k.
\]
Consequently the original Erd\H{o}s equation is equivalent to the existence of a primary pseudoperfect number
\[
        N=p_1p_2\cdots p_k.
\]
\end{proposition}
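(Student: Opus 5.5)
The plan is to clear denominators and then read off divisibility conditions prime by prime. Write $P=p_1p_2\cdots p_k$ and multiply the hypothesis $\sum_i 1/p_i=1-1/m$ by $mP$, giving
\[
        m\sum_{i=1}^k \frac{P}{p_i}=mP-P,\qquad\text{equivalently}\qquad P=m\Bigl(P-\sum_{i=1}^k\frac{P}{p_i}\Bigr).
\]
Set $D:=P-\sum_i P/p_i$, an integer. The identity $P=mD$ shows at once that $D>0$ and that $m\mid P$. I then show $D=1$, which forces $m=P$.

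To get $D=1$, I look at each prime $p_j$ separately. Modulo $p_j$, every term $P/p_i$ with $i\neq j$ vanishes, and so does $P$ itself. The only survivor is $-P/p_j=-\prod_{i\ne j}p_i$, which is nonzero mod $p_j$ because the primes are distinct. Hence $p_j\nmid D$ for every $j$. On the other hand $D$ divides $P$, since $P=mD$, and $P$ is squarefree with prime factors exactly the $p_j$. A positive divisor of $P$ coprime to every $p_j$ must be $1$, so $D=1$ and $m=P$.

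For the "consequently" part, substitute $m=P$ back into the hypothesis to get $1/N+\sum_{p\mid N}1/p=1$ with $N=P$ squarefree, which is~\eqref{eq:ppn-def}. Conversely, any primary pseudoperfect number $N$ with prime factors $p_1<\cdots<p_k$ gives a solution of~\eqref{eq:erdos-original} with $m=N$. These two maps are mutually inverse, so solutions of Erd\H{o}s' equation correspond bijectively to primary pseudoperfect numbers. In particular, infinitely many of one is equivalent to infinitely many of the other.

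I expect no real obstacle here; the key step is the coprimality of $D$ with each $p_j$. The only care needed is with degenerate cases. Positivity of $D$ follows from $m\ge1$ and $P=mD$. The case $m=1$ would give $\sum 1/p_i=0$, which is impossible for $k\ge1$, and it is excluded anyway once $m=P\ge2$.
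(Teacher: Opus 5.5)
Your proof is correct and is essentially the paper's argument. Both rest on the fact that modulo each $p_j$ only the term $P/p_j$ of $\sum_i P/p_i$ survives, and it is nonzero. The paper uses this to get $p_j\mid m$ for every $j$, hence $P\mid m$, and finishes by integrality of $A=P-1/t$; you package the same identity as $P=mD$ with $D=P-\sum_i P/p_i$ coprime to $P$, which forces $D=1$ directly — a minor repackaging, not a different route.
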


\begin{proof}
Let
\[
        P=p_1p_2\cdots p_k,
        \qquad
        A=\sum_{i=1}^k\frac{P}{p_i}.
\]
Then
\[
        \frac AP=\frac{m-1}{m},
\]
and hence
\begin{equation}\label{eq:mA}
        mA=(m-1)P.
\end{equation}
Fix $i$.  Reducing $A$ modulo~$p_i$, we have
\[
        A\equiv \frac{P}{p_i}\not\equiv0\pmod {p_i},
\]
since all other terms $P/p_j$ with $j\ne i$ are divisible by~$p_i$.  The right-hand side of~\eqref{eq:mA} is divisible by~$p_i$.  Therefore $p_i\mid m$.  This holds for each~$i$, so $P\mid m$.

Write $m=Pt$.  Then
\[
        \frac AP=1-\frac1{Pt},
\]
so
\[
        A=P-\frac1t.
\]
Since $A$ and $P$ are integers, $t=1$.  Thus $m=P$.
\end{proof}

\section{The arithmetic derivative formulation}

Let $\dd(n)$ denote the arithmetic derivative, defined by
\[
        \dd(p)=1\quad(p\text{ prime}),
        \qquad
        \dd(ab)=a\dd(b)+b\dd(a).
\]
For squarefree~$n$,
\begin{equation}\label{eq:arith-der-squarefree}
        \dd(n)=\sum_{p\mid n}\frac np.
\end{equation}
Multiplying~\eqref{eq:ppn-def} by~$n$, we get
\[
        1+\sum_{p\mid n}\frac np=n.
\]
Using~\eqref{eq:arith-der-squarefree}, this becomes
\begin{equation}\label{eq:arith-der-ppn}
        \boxed{\dd(n)=n-1.}
\end{equation}
Thus primary pseudoperfect numbers are precisely the squarefree solutions of~\eqref{eq:arith-der-ppn}.

\section{Defect states}

Before introducing ports, it is useful to see the primitive state machine underlying the construction.  Let $D$ be a squarefree integer and define its defect numerator by
\[
        a(D)=D\left(1-\sum_{p\mid D}\frac1p\right)=D-\dd(D).
\]
If we append a new prime~$q$, then
\[
        D\mapsto Dq,
\]
and
\[
\begin{aligned}
        a(Dq)
        &=Dq-\dd(Dq)\\
        &=Dq-(q\dd(D)+D)\\
        &=q(D-\dd(D))-D\\
        &=qa(D)-D.
\end{aligned}
\]
Thus
\begin{equation}\label{eq:defect-machine}
        (D,a)\xrightarrow{q}(Dq,qa-D).
\end{equation}
The success condition is $a=1$, because $D-\dd(D)=1$ is equivalent to $\dd(D)=D-1$.

If one wants to complete a state $(D,a)$ with a single prime~$q$, then
\[
        qa-D=1,
\]
so
\begin{equation}\label{eq:one-step-defect}
        q=\frac{D+1}{a}.
\end{equation}
This explains the elementary chain
\[
        2\to6\to42\to1806,
\]
since
\[
        2+1=3,
        \qquad
        6+1=7,
        \qquad
        42+1=43.
\]

\section{Inheritance}

The simplest way to grow a primary pseudoperfect number is inheritance.  The next lemma is the basic identity behind the process.

\begin{lemma}[inheritance equation]\label{lem:inheritance-equation}
Let $K$ be a primary pseudoperfect number and let $C$ be squarefree and coprime to~$K$.  Then $KC$ is a primary pseudoperfect number if and only if
\begin{equation}\label{eq:inheritance-C}
        C-K\dd(C)=1.
\end{equation}
\end{lemma}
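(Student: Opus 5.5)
We will reduce everything to the characterization \eqref{eq:arith-der-ppn}: a squarefree integer $n$ is a primary pseudoperfect number exactly when $\dd(n)=n-1$. First we check that $KC$ is squarefree. $K$ is squarefree by definition, $C$ is squarefree by hypothesis, and $\gcd(K,C)=1$, so no prime divides both and $KC$ is squarefree. Hence $KC$ is a primary pseudoperfect number if and only if
\[
        \dd(KC)=KC-1 .
\]

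Next we expand $\dd(KC)$ by the product rule:
\[
        \dd(KC)=K\dd(C)+C\dd(K).
\]
Since $K$ is a primary pseudoperfect number, \eqref{eq:arith-der-ppn} gives $\dd(K)=K-1$. Substituting,
\[
        \dd(KC)=K\dd(C)+C(K-1)=K\dd(C)+KC-C .
\]
So the condition $\dd(KC)=KC-1$ is equivalent to $K\dd(C)-C=-1$, that is, $C-K\dd(C)=1$, which is \eqref{eq:inheritance-C}. Every step is a reversible equivalence, so this proves both directions at once.

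The only point needing care is the squarefreeness of $KC$. This is exactly where coprimality is used, and it is what allows us to invoke the characterization \eqref{eq:arith-der-ppn} in both directions. The product rule itself holds for all positive integers and needs no coprimality. The edge case $C=1$ is consistent: $\dd(1)=0$, so \eqref{eq:inheritance-C} holds trivially and $KC=K$. No step is a genuine obstacle; the argument is a direct computation.
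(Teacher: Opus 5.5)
Your proof is correct and is essentially the paper's argument: expand $\partial(KC)$ by the product rule, substitute $\partial(K)=K-1$, and reduce the condition $\partial(KC)=KC-1$ to $C-K\partial(C)=1$. You also check explicitly that $KC$ is squarefree, which is where coprimality enters; the paper's proof leaves this point implicit.
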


\begin{proof}
Since $K$ is a primary pseudoperfect number, $\dd(K)=K-1$.  Hence
\[
\begin{aligned}
        \dd(KC)
        &=\dd(K)C+K\dd(C)\\
        &=(K-1)C+K\dd(C).
\end{aligned}
\]
The condition $\dd(KC)=KC-1$ is therefore equivalent to
\[
        (K-1)C+K\dd(C)=KC-1,
\]
which simplifies to~\eqref{eq:inheritance-C}.
\end{proof}

\begin{corollary}[one-prime inheritance]\label{cor:one-prime-inheritance}
If $K$ is a primary pseudoperfect number and $K+1$ is prime, then $K(K+1)$ is a primary pseudoperfect number.
\end{corollary}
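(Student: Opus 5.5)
The plan is to apply Lemma~\ref{lem:inheritance-equation} with $C=q:=K+1$, which is prime by hypothesis. The first step is to check the lemma's hypotheses. The prime $q$ is automatically squarefree. It is also coprime to $K$, because $\gcd(K,K+1)=1$; equivalently, $q>K$, so $q$ cannot divide $K$. Consequently $KC=K(K+1)$ is squarefree, since $K$ is squarefree by the definition of a primary pseudoperfect number.

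The second step is to verify the inheritance equation~\eqref{eq:inheritance-C}. Since $\dd(q)=1$ for a prime $q$, the left-hand side becomes
\[
        C-K\dd(C)=(K+1)-K\cdot 1=1.
\]
So the condition holds, and the lemma then gives that $K(K+1)$ is a primary pseudoperfect number.

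As a sanity check, one can also reach the same conclusion through the defect machine~\eqref{eq:defect-machine}. Starting from the state $(K,1)$ and appending $q=K+1$ gives the new defect $qa-D=(K+1)\cdot1-K=1$. This matches the one-step formula~\eqref{eq:one-step-defect}, namely $q=(K+1)/1$.

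There is no real obstacle here. The only point needing care is the coprimality and squarefreeness requirement in Lemma~\ref{lem:inheritance-equation}, and this is immediate from $\gcd(K,K+1)=1$.
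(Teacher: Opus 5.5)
Your proposal is correct and is essentially the paper's own proof: both apply Lemma~\ref{lem:inheritance-equation} with $C=q=K+1$ and use $\dd(q)=1$ to reduce the inheritance equation to $q-K=1$. Your explicit check of the lemma's coprimality and squarefreeness hypotheses via $\gcd(K,K+1)=1$ is a small improvement, since the paper leaves that check implicit.
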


\begin{proof}
Take $C=q$ prime in~\eqref{eq:inheritance-C}.  Since $\dd(q)=1$, the equation becomes
\[
        q-K=1,
\]
so $q=K+1$.
\end{proof}

\begin{corollary}[two-prime inheritance]\label{cor:two-prime-inheritance}
Let $K$ be a primary pseudoperfect number.  A product $Kpq$ with two new primes $p<q$ is a primary pseudoperfect number if and only if
\begin{equation}\label{eq:two-prime-inheritance}
        (p-K)(q-K)=K^2+1.
\end{equation}
\end{corollary}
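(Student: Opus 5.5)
The plan is to specialize Lemma~\ref{lem:inheritance-equation} to $C=pq$ and then complete a product. Throughout, $p$ and $q$ are distinct primes not dividing $K$; this is implicit in ``new primes'' and is what makes $C=pq$ squarefree and coprime to $K$.

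First we apply the lemma. It says $Kpq$ is a primary pseudoperfect number if and only if $pq-K\dd(pq)=1$. By \eqref{eq:arith-der-squarefree}, $\dd(pq)=p+q$, so the condition reads
\[
        pq-Kp-Kq=1 .
\]

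Next we complete the product. Adding $K^2$ to both sides gives
\[
        pq-Kp-Kq+K^2=K^2+1 ,
\]
and the left side factors as $(p-K)(q-K)$. This is exactly \eqref{eq:two-prime-inheritance}. Every step is reversible, so both directions of the equivalence hold at once.

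The only point needing care is the hypothesis of Lemma~\ref{lem:inheritance-equation}: the lemma requires $C$ squarefree and coprime to $K$. With the reading of ``new primes'' above, this holds because $p\ne q$ and neither divides $K$. There is no genuine obstacle beyond stating this. We could add, though the statement does not need it, that since $K^2+1>0$ the factors $p-K$ and $q-K$ share a sign. Both cannot be negative, because then $p,q<K$ and $pq-K(p+q)<0\ne1$. Hence $K<p<q$, and $p-K$ runs over the divisors of $K^2+1$ below $K$.
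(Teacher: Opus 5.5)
Your proof is correct and follows the paper's own argument: set $C=pq$ in Lemma~\ref{lem:inheritance-equation}, use $\dd(pq)=p+q$ to get $pq-K(p+q)=1$, then add $K^2$ to both sides and factor. The paper leaves two points implicit that you make explicit: that ``new primes'' makes $C$ squarefree and coprime to $K$, and the sign remark giving $K<p<q$; both are harmless additions.
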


\begin{proof}
Put $C=pq$.  Then $\dd(C)=p+q$, and~\eqref{eq:inheritance-C} becomes
\[
        pq-K(p+q)=1.
\]
Adding $K^2$ to both sides gives~\eqref{eq:two-prime-inheritance}.
\end{proof}

For three new primes $x,y,z$, the inheritance equation is
\[
        xyz-K(xy+xz+yz)=1.
\]
Solving for $z$ gives
\begin{equation}\label{eq:three-prime-inheritance}
        z=\frac{Kxy+1}{xy-Kx-Ky}.
\end{equation}
This formula is useful when the one-prime and two-prime inheritances fail.

\section{Ports and fillings}

The inheritance equation can be localized.  This is the main formalism used throughout the rest of the paper.

\begin{definition}[port and filling]\label{def:port}
For positive integers $R,c$ and squarefree~$B$, define
\begin{equation}\label{eq:Delta-def}
        \D_{R,c}(B)=cB-R\dd(B).
\end{equation}
If
\[
        \D_{R,c}(B)=1,
\]
we say that $B$ \emph{fills the port} $(R,c)$.
\end{definition}

If $B$ fills $(R,c)$, then dividing~\eqref{eq:Delta-def} by $RB$ gives
\begin{equation}\label{eq:port-reciprocal}
        \sum_{q\mid B}\frac1q+\frac1{RB}=\frac cR.
\end{equation}
Thus a port is an exact residual reciprocal equation.

If $B=q$ is prime, then
\[
        \D_{R,c}(q)=cq-R.
\]
Hence appending a prime gives the state transition
\begin{equation}\label{eq:port-transition}
        (R,c)\xrightarrow{q}(Rq,cq-R).
\end{equation}

\begin{lemma}[composition law]\label{lem:composition}
Let $A,B$ be coprime squarefree integers.  Then
\begin{equation}\label{eq:composition-law}
        \D_{R,c}(AB)=\D_{RA,\D_{R,c}(A)}(B).
\end{equation}
Moreover,
\begin{equation}\label{eq:dual-law}
        \D_{R,c}(AB)=\D_{RB,\D_{R,c}(B)}(A).
\end{equation}
\end{lemma}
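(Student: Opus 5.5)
The plan is a direct expansion of both sides using only the Leibniz rule for $\dd$, which is available since $A$ and $B$ are coprime squarefree integers. First I expand the left-hand side. By the product rule,
\[
        \dd(AB)=A\dd(B)+B\dd(A),
\]
so
\[
        \D_{R,c}(AB)=cAB-R\bigl(A\dd(B)+B\dd(A)\bigr).
\]

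Next I expand the right-hand side of \eqref{eq:composition-law} directly from Definition~\ref{def:port}. Set $c'=\D_{R,c}(A)=cA-R\dd(A)$. Then
\[
        \D_{RA,c'}(B)=c'B-RA\dd(B)=\bigl(cA-R\dd(A)\bigr)B-RA\dd(B).
\]
Distributing gives $cAB-RB\dd(A)-RA\dd(B)$. This agrees term by term with the expansion of the left-hand side, which proves \eqref{eq:composition-law}.

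The dual law \eqref{eq:dual-law} follows from \eqref{eq:composition-law} with the roles of $A$ and $B$ exchanged. The hypotheses are symmetric in $A$ and $B$, and $AB=BA$. Alternatively, one can note that the expanded expression $cAB-RB\dd(A)-RA\dd(B)$ is visibly symmetric in $(A,B)$.

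There is no real obstacle here. The only point to flag is a well-definedness issue. $\D_{R,c}(A)$ might be zero or negative, whereas Definition~\ref{def:port} was stated for positive integers $R,c$. I will remark that the formula $\D_{R,c}(B)=cB-R\dd(B)$ makes sense for arbitrary integers $R,c$, and I will read the lemma with that extension. The coprimality and squarefreeness of $A,B$ are used only to ensure that $AB$ is squarefree, so that $\D_{R,c}(AB)$ falls under the definition. The Leibniz rule itself holds for all positive integers.
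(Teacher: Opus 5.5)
Your proof is correct and is essentially the paper's argument: expand $\Delta_{R,c}(AB)$ with the Leibniz rule, regroup it as $B\bigl(cA-R\partial(A)\bigr)-RA\partial(B)$, and get the dual law by interchanging $A$ and $B$. Your side remark that $\Delta_{R,c}(A)$ need not be positive, so that the definition must be read for arbitrary integer $c$, is a sensible clarification that the paper leaves implicit.
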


\begin{proof}
Since
\[
        \dd(AB)=A\dd(B)+B\dd(A),
\]
we have
\[
\begin{aligned}
        \D_{R,c}(AB)
        &=cAB-R\dd(AB)\\
        &=cAB-R(A\dd(B)+B\dd(A))\\
        &=B(cA-R\dd(A))-RA\dd(B)\\
        &=\D_{RA,\D_{R,c}(A)}(B).
\end{aligned}
\]
Interchanging $A$ and $B$ gives~\eqref{eq:dual-law}.
\end{proof}

\begin{lemma}[ambient ports]\label{lem:ambient-port}
Let $R$ be squarefree and put
\[
        c=R-\dd(R).
\]
Let $B$ be squarefree and coprime to $R$.  If $B$ fills the port $(R,c)$, i.e.
\[
        \D_{R,c}(B)=1,
\]
then $RB$ is a primary pseudoperfect number.
\end{lemma}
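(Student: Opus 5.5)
The plan is to compute $\dd(RB)$ directly with the product rule and substitute the filling condition. This is essentially the same computation as in Lemma~\ref{lem:inheritance-equation}, now with $K$ replaced by a general squarefree $R$. The only change is that the defect $c=R-\dd(R)$ no longer has to equal $1$. Equivalently, Lemma~\ref{lem:composition} can be used with the trivial base port.

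First, check that $RB$ is squarefree. This holds because $R$ and $B$ are each squarefree and $\gcd(R,B)=1$, so $RB$ is a legitimate candidate and formula~\eqref{eq:arith-der-squarefree} applies to it. Note also that $RB$ is positive: $R$ and $B$ are positive integers, $R$ by the convention of Definition~\ref{def:port} and $B$ as a squarefree positive integer.

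Second, apply the product rule: $\dd(RB)=B\,\dd(R)+R\,\dd(B)$. Substitute $\dd(R)=R-c$ and $R\,\dd(B)=cB-\D_{R,c}(B)=cB-1$. This gives
\[
\dd(RB)=B(R-c)+cB-1=RB-1,
\]
which is exactly~\eqref{eq:arith-der-ppn}. By Section~3, a squarefree solution of~\eqref{eq:arith-der-ppn} is a primary pseudoperfect number.

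A composition-law version of the same argument runs as follows. For the port $(1,1)$ we have $\D_{1,1}(n)=n-\dd(n)$, so $n$ is a primary pseudoperfect number iff $\D_{1,1}(n)=1$. By~\eqref{eq:composition-law} with $A=R$,
\[
\D_{1,1}(RB)=\D_{R,\D_{1,1}(R)}(B)=\D_{R,c}(B)=1.
\]
There is no real obstacle here. The only point needing care is the coprimality hypothesis, which is needed both for squarefreeness of $RB$ and for the hypotheses of Lemma~\ref{lem:composition}. I should also check the edge case $R=1$, where $c=1$ and the statement reduces to the definition itself.
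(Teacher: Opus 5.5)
Your proposal is correct and matches the paper's proof: expand $\dd(RB)=B\dd(R)+R\dd(B)$, substitute $\dd(R)=R-c$, and use $cB-R\dd(B)=1$ to get $\dd(RB)=RB-1$. Your explicit check that $RB$ is squarefree (from coprimality) is a detail the paper leaves implicit, and your composition-law variant with the port $(1,1)$ is a valid repackaging of the same identity.
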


\begin{proof}
Since $c=R-\dd(R)$, we compute
\[
\begin{aligned}
        \dd(RB)
        &=B\dd(R)+R\dd(B)\\
        &=B(R-c)+R\dd(B)\\
        &=RB-(cB-R\dd(B))\\
        &=RB-1.
\end{aligned}
\]
Thus $RB$ satisfies $\dd(RB)=RB-1$, which is equivalent to being a primary pseudoperfect number.
\end{proof}

\begin{lemma}[coprimality of reachable ports]\label{lem:reachable-coprime}
Suppose $R$ is squarefree, $c=R-\dd(R)$, and $\gcd(R,c)=1$.  If $q$ is a prime not dividing $R$, and
\[
        (R,c)\xrightarrow{q}(Rq,cq-R),
\]
then
\[
        cq-R=Rq-\dd(Rq)
\]
and
\[
        \gcd(Rq,cq-R)=1.
\]
\end{lemma}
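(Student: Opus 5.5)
The plan is to verify the two claims separately, using only the product rule for $\dd$ and elementary gcd reasoning.

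For the identity $cq-R=Rq-\dd(Rq)$, I expand $\dd(Rq)=q\dd(R)+R\dd(q)=q\dd(R)+R$. Then $Rq-\dd(Rq)=q(R-\dd(R))-R=cq-R$, using $c=R-\dd(R)$. This is exactly the defect-machine computation \eqref{eq:defect-machine} with $D=R$ and $a=c$. In particular the new port $(Rq,cq-R)$ is again ambient in the sense of Lemma~\ref{lem:ambient-port}, and $Rq$ is squarefree because $q\nmid R$.

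For the coprimality, suppose a prime $\ell$ divides both $Rq$ and $cq-R$. Since $Rq$ is squarefree, either $\ell\mid R$ or $\ell=q$, and I treat the two cases separately.
\begin{enumerate}[label=(\alph*)]
\item If $\ell\mid R$, then $\ell\mid cq$. Because $\gcd(R,c)=1$ we have $\ell\nmid c$, so $\ell\mid q$, i.e.\ $\ell=q$. This contradicts $q\nmid R$.
\item If $\ell=q$, then $q\mid cq-R$ gives $q\mid R$, again contradicting the hypothesis.
\end{enumerate}
Hence $\gcd(Rq,cq-R)=1$.

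There is no real obstacle here. The one point to keep track of is that the hypothesis $\gcd(R,c)=1$ is used only in case (a), and $q\nmid R$ is used in both cases. This lemma lets the coprimality propagate along the whole chain of transitions \eqref{eq:port-transition} starting from an ambient coprime port.
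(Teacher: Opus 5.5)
Your proof is correct and takes the same route as the paper: the identity uses the same product-rule expansion $\dd(Rq)=q\dd(R)+R$, and your prime-by-prime case split is the paper's reduction of $cq-R$ modulo $R$ and modulo $q$ written out explicitly. One small point: the dichotomy ``$\ell\mid R$ or $\ell=q$'' follows from $\ell$ being prime alone, so you do not need the squarefreeness of $Rq$ there.
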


\begin{proof}
The identity follows from
\[
        \dd(Rq)=q\dd(R)+R.
\]
Thus
\[
        Rq-\dd(Rq)=Rq-q\dd(R)-R=q(R-\dd(R))-R=cq-R.
\]
For the coprimality, reduce $cq-R$ modulo $R$ and modulo $q$.  Since
\[
        cq-R\equiv cq\pmod R
\]
and $\gcd(R,c)=1$, $q\nmid R$, no prime divisor of $R$ divides $cq-R$.  Also
\[
        cq-R\equiv -R\pmod q,
\]
which is nonzero because $q\nmid R$.  Hence $\gcd(Rq,cq-R)=1$.
\end{proof}

\begin{definition}[port-primitive filling]\label{def:primitive-filling}
Let $(R,c)$ be a fixed port.  A filling $B$ of $(R,c)$ is called \emph{port-primitive} if no proper squarefree divisor $B_0\mid B$, $1<B_0<B$, is itself a filling of the same port $(R,c)$.  Otherwise $B$ is called \emph{inherited}.
\end{definition}

\begin{remark}
The adjective ``port-primitive'' is relative to the fixed residual equation $(R,c)$.  It is not the standard notion of a primitive pseudoperfect or primitive abundant number.
\end{remark}

\begin{lemma}[port-Zn\'am congruence]\label{lem:port-znam}
If $B$ fills $(R,c)$ and $q\mid B$ is prime, then
\begin{equation}\label{eq:port-znam}
        q\mid R\frac{B}{q}+1.
\end{equation}
\end{lemma}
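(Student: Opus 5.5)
The plan is to write $B=qB'$ with $B'=B/q$. Since $B$ is squarefree, $\gcd(q,B')=1$. We then reduce the filling equation modulo $q$.

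The product rule gives $\dd(B)=q\,\dd(B')+B'$. Hence
\[
        1=\D_{R,c}(B)=cqB'-Rq\,\dd(B')-RB',
\]
and reducing modulo $q$ yields $RB'\equiv -1\pmod q$. This is exactly $q\mid RB/q+1$, as claimed in \eqref{eq:port-znam}.

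The same computation can be phrased through the dual composition law \eqref{eq:dual-law}, taking $A=q$ and $B$ replaced by $B'$. This gives
\[
        \D_{R,c}(B)=\D_{RB',\D_{R,c}(B')}(q)=\D_{R,c}(B')\,q-RB',
\]
because $\dd(q)=1$. Setting this equal to $1$ and reducing modulo $q$ gives the result immediately.

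There is no genuine obstacle here. The only points to check are that the squarefree hypothesis makes the product rule split cleanly, and that the sign is handled correctly, so that we get $RB'\equiv-1$ and not $+1$.
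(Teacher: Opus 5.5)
Your proof is correct and is essentially the paper's argument. The paper reduces $cB-R\partial(B)=1$ modulo $q$ using $\partial(B)\equiv B/q\pmod q$, which it reads off from the squarefree sum formula for $\partial(B)$. That is the same content as your identity $\partial(qB')=q\,\partial(B')+B'$ reduced modulo $q$, and your reformulation via the dual composition law is an equivalent restatement of it.
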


\begin{proof}
Modulo~$q$, we have $B\equiv0$ and
\[
        \dd(B)\equiv \frac Bq\pmod q.
\]
Since $cB-R\dd(B)=1$, reducing modulo~$q$ gives
\[
        -R\frac Bq\equiv1\pmod q.
\]
This is~\eqref{eq:port-znam}.
\end{proof}

\section{The central port and the key port}

Starting from the primary pseudoperfect number $6=2\cdot3$, one may search for a new squarefree block~$B$ satisfying
\[
        B-6\dd(B)=1.
\]
The non-chain branch begins by appending $11$:
\[
        (6,1)\xrightarrow{11}(66,5).
\]
Thus the central port is
\[
        (66,5),
\]
with equation
\begin{equation}\label{eq:central-port}
        5B-66\dd(B)=1.
\end{equation}
The following fillings of the central port are relevant:
\begin{center}
\begin{tabular}{lll}
\toprule
Filling $B$ & Resulting PPN $66B$ & Comment \\
\midrule
$23\cdot31$ & $47058$ & known example \\
$23\cdot31\cdot47059$ & $2214502422$ & inherited from $47058+1$ \\
$17\cdot101\cdot149\cdot3109$ & $52495396602$ & known example \\
$17\cdot101\cdot157\cdot1979\cdot10093\cdot16879$ & $5998279018951962402$ & example exhibited here \\
\bottomrule
\end{tabular}
\end{center}

The common prefix $17,101$ gives the key port of this paper:
\[
        (66,5)\xrightarrow{17}(1122,19)\xrightarrow{101}(113322,797).
\]
We write
\begin{equation}\label{eq:H-def}
        H=(113322,797).
\end{equation}
This is an ambient port in the sense of Lemma~\ref{lem:ambient-port}: indeed
\[
        113322-\dd(113322)=797.
\]
A filling $B$ of $H$ satisfies
\begin{equation}\label{eq:H-fill}
        797B-113322\dd(B)=1.
\end{equation}
Since $H$ is obtained from the central port $(66,5)$ by the prefix $17\cdot101$, the composition law also implies that every filling of $H$ gives a filling of $(66,5)$.  Equivalently, by Lemma~\ref{lem:ambient-port}, $113322B=66\cdot17\cdot101\cdot B$ is a primary pseudoperfect number.

\begin{proposition}[global congruences for $H$]\label{prop:H-congruences}
If $B$ fills $H=(113322,797)$, then
\begin{equation}\label{eq:H-B-congruence}
        B\equiv9953\pmod{113322}
\end{equation}
and
\begin{equation}\label{eq:H-der-congruence}
        \dd(B)\equiv70\pmod{797}.
\end{equation}
\end{proposition}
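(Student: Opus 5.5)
The plan is to reduce the filling equation \eqref{eq:H-fill} modulo each of the two coefficients separately. Each congruence is then just the statement that one coefficient is invertible modulo the other, with an explicit inverse. Everything should come from a single Bézout-type identity between $797$ and $113322$, which I will verify by direct multiplication:
\[
        797\cdot 9953 \;=\; 7932541 \;=\; 70\cdot 113322+1 .
\]
In particular this identity shows $\gcd(797,113322)=1$, which is also consistent with Lemma~\ref{lem:reachable-coprime} applied along the chain $(6,1)\to(66,5)\to(1122,19)\to(113322,797)$. It also identifies $9953$ as the inverse of $797$ modulo $113322$, and $-70$ as the inverse of $113322$ modulo $797$.

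First step: reduce $797B-113322\,\dd(B)=1$ modulo $113322$. The derivative term vanishes, leaving $797B\equiv 1\pmod{113322}$. Multiplying both sides by $9953$ and using $797\cdot 9953\equiv 1\pmod{113322}$ gives $B\equiv 9953\pmod{113322}$, which is \eqref{eq:H-B-congruence}.

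Second step: reduce the same equation modulo $797$. Now the $B$ term vanishes, leaving $-113322\,\dd(B)\equiv 1\pmod{797}$. Reading the identity above modulo $797$ gives $70\cdot 113322\equiv -1\pmod{797}$. Hence $-113322\cdot 70\equiv 1$, so $70$ is the inverse of $-113322$ modulo $797$. Multiplying by $70$ therefore yields $\dd(B)\equiv 70\pmod{797}$, which is \eqref{eq:H-der-congruence}.

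There is no real obstacle here. The only step requiring care is the arithmetic check of the identity $797\cdot 9953=70\cdot113322+1$, together with keeping the sign straight in the second reduction. As a sanity check, I will also confirm the congruences on the known filling $149\cdot 3109=463241$. Indeed $463241=4\cdot 113322+9953$, and $\dd(149\cdot3109)=3258=4\cdot797+70$, so both congruences hold.
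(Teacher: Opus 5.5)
Your proposal is correct and is the paper's proof: reduce the filling equation modulo $113322$ and modulo $797$, then invert the surviving coefficient. The single identity $797\cdot 9953=70\cdot 113322+1$ certifies both inverses at once, and you verified it by direct multiplication, whereas the paper only asserts the two inverses.
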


\begin{proof}
Reducing~\eqref{eq:H-fill} modulo $113322$ gives
\[
        797B\equiv1\pmod{113322}.
\]
The inverse of $797$ modulo $113322$ is $9953$, giving~\eqref{eq:H-B-congruence}.

Reducing~\eqref{eq:H-fill} modulo $797$ gives
\[
        -113322\dd(B)\equiv1\pmod{797}.
\]
This is equivalent to~\eqref{eq:H-der-congruence}.
\end{proof}

\section{Two port-primitive fillings of the key port}

The port $H$ has at least two port-primitive fillings of different lengths.

\begin{proposition}\label{prop:H-B2}
The product
\[
        B_2=149\cdot3109
\]
fills $H$.
\end{proposition}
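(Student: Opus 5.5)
Since $B_2=149\cdot 3109$ is a product of two distinct primes, the plan is a direct evaluation of $\D_{113322,797}(B_2)=797B_2-113322\,\dd(B_2)$ via Definition~\ref{def:port}. First I will confirm that $149$ and $3109$ are prime and do not divide $113322=2\cdot3\cdot11\cdot17\cdot101$, so $B_2$ is squarefree and coprime to $R$. Primality of $149$ is immediate. For $3109$, trial division by the primes up to $\lfloor\sqrt{3109}\rfloor=55$ suffices.

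By \eqref{eq:arith-der-squarefree}, $\dd(B_2)=149+3109=3258$ and $B_2=149\cdot3109=463241$. A cleaner route uses the composition law (Lemma~\ref{lem:composition}) with $A=149$, which reduces everything to one-prime transitions \eqref{eq:port-transition}. Appending $149$ gives
\[
(113322,797)\xrightarrow{149}(113322\cdot149,\;797\cdot149-113322)=(16884978,\;3431).
\]
By \eqref{eq:composition-law}, $\D_H(B_2)=\D_{16884978,3431}(3109)=3431\cdot3109-16884978$. Since $3431\cdot3109=10666979$, this equals $-6218001$, which is not $1$.

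That is a warning sign, so I recheck the port arithmetic. We have $797\cdot 149=118753$, and $118753-113322=5431$, not $3431$; the earlier figure was a slip. Then $5431\cdot3109=16884979$, and $16884979-16884978=1$, as required. Equivalently, $797\cdot463241-113322\cdot3258=369203077-369203076=1$.

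As an independent check, Proposition~\ref{prop:H-congruences} requires $B_2\equiv9953\pmod{113322}$. Indeed $463241-4\cdot113322=9953$, which agrees.

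The only real obstacle is arithmetic reliability, as the slip above shows. So I would present the computation through the one-prime transitions, checking each product twice, and then confirm with the congruence of Proposition~\ref{prop:H-congruences}.
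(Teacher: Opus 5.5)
Your proposal is correct and matches the paper's proof, which is the same direct check $797\cdot463241-113322\cdot3258=369203077-369203076=1$. Routing it through the one-prime transition $(113322,797)\xrightarrow{149}(16884978,5431)$ is that arithmetic reorganized, and your trial-division check that $3109$ is prime justifies $\dd(B_2)=149+3109$, which the paper takes for granted; in the final write-up, drop the erroneous $3431$ detour.
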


\begin{proof}
We have
\[
        B_2=463241,
        \qquad
        \dd(B_2)=149+3109=3258.
\]
Therefore
\[
        797B_2=369203077,
        \qquad
        113322\dd(B_2)=369203076,
\]
and hence
\[
        797B_2-113322\dd(B_2)=1.
\]
\end{proof}

\begin{proposition}\label{prop:H-B2-primitive}
The filling $B_2=149\cdot3109$ is port-primitive as a filling of $H$.
\end{proposition}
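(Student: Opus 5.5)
The plan is to use Definition~\ref{def:primitive-filling} directly. Since $B_2=149\cdot3109$ is a product of exactly two primes, its only proper squarefree divisors $B_0$ with $1<B_0<B_2$ are the two primes $149$ and $3109$ themselves. So port-primitivity reduces to checking that neither single prime fills $H$.

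For a prime $q$ we have $\dd(q)=1$, so by~\eqref{eq:port-transition}
\[
        \D_{113322,797}(q)=797q-113322 .
\]
Hence $q$ fills $H$ if and only if $797q=113323$. I will rule this out for each of the two primes separately.

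\begin{itemize}
\item For $q=149$ the value is $797\cdot149-113322=118753-113322=5431\neq1$.
\item For $q=3109$ the value $797\cdot3109-113322$ is on the order of $2.36\times10^6$, far from $1$.
\end{itemize}

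A uniform alternative avoids both computations. The only prime that could possibly fill $H$ alone is $q=113323/797\approx142.2$. This is not an integer, since $797\cdot142=113174$ and $797\cdot143=113971$. So no prime at all fills $H$.

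Either route shows that no proper divisor $B_0$ with $1<B_0<B_2$ fills $H$. Combined with Proposition~\ref{prop:H-B2}, this gives primitivity. There is no real obstacle here; the only care needed is to confirm that the proper divisors are exactly the two primes, which holds because $149$ and $3109$ are distinct primes.
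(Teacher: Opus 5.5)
Your proof is correct and matches the paper's: the paper also notes that a one-prime filling of $H$ would need $q=113323/797$, which is not an integer, so neither $149$ nor $3109$ fills $H$. Your direct evaluations at $149$ and $3109$, and your explicit remark that these two primes are the only proper nontrivial divisors of $B_2$, are a harmless extra check.
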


\begin{proof}
If $q$ were a one-prime filling of $H$, then
\[
        797q-113322=1,
\]
so
\[
        q=\frac{113323}{797},
\]
which is not an integer.  Hence no proper nontrivial divisor of $B_2$ fills $H$.
\end{proof}

This filling gives
\[
        113322B_2=52495396602.
\]

\begin{proposition}\label{prop:H-B4}
The product
\[
        B_4=157\cdot1979\cdot10093\cdot16879
\]
fills $H$.
\end{proposition}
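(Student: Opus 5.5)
The plan is to verify the identity $797B_4-113322\,\dd(B_4)=1$ by feeding the four primes of $B_4$ into the port $H$ one at a time, rather than expanding $\dd(B_4)$ directly. By the composition law (Lemma~\ref{lem:composition}), applied repeatedly with $A$ a single prime, we get the transition rule~\eqref{eq:port-transition}. So it suffices to run the chain
\[
(113322,797)\xrightarrow{157}(R_1,c_1)\xrightarrow{1979}(R_2,c_2)\xrightarrow{10093}(R_3,c_3)
\]
and then check the terminal one-prime condition $c_3\cdot 16879-R_3=1$. This condition says exactly that $16879$ fills $(R_3,c_3)$. Hence $\D_H(B_4)=\D_{R_3,c_3}(16879)=1$.

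The concrete steps, in order, are as follows.
\begin{itemize}
\item First, $R_1=113322\cdot157=17791554$ and $c_1=797\cdot157-113322=11807$.
\item Next, $R_2=17791554\cdot1979=35209485366$ and $c_2=11807\cdot1979-17791554=5574499$.
\item Then $R_3=35209485366\cdot10093=355369335799038$ and $c_3=5574499\cdot10093-35209485366=21053933041$.
\item Finally, $21053933041\cdot16879=355369335799039$, which exceeds $R_3$ by exactly $1$.
\end{itemize}
Each product should be split as a round number minus a small correction (for instance $\cdot1979=\cdot2000-\cdot21$ and $\cdot16879=\cdot17000-\cdot121$), so that every line is checkable by hand.

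As a cross-check I would also record the direct computation. This uses $B_4=157\cdot1979\cdot10093\cdot16879$ and
\[
\dd(B_4)=\sum_{q\mid B_4}B_4/q,
\]
from~\eqref{eq:arith-der-squarefree}, and compares $797B_4$ with $113322\,\dd(B_4)$. The two sides should differ by $1$, exactly as in the proof of Proposition~\ref{prop:H-B2}. A further sanity check comes from Lemma~\ref{lem:port-znam}: each prime $q\mid B_4$ must divide $113322\,B_4/q+1$. Proposition~\ref{prop:H-congruences} gives another: $B_4\equiv9953\pmod{113322}$.

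The only real obstacle is arithmetic reliability with numbers of about fifteen digits. I expect the main risk to be a slip in one of the intermediate $R_i$, which would propagate to a wrong terminal defect. I would guard against this by recomputing each $R_i$ and $c_i$ twice with different splittings. I would also check the terminal identity modulo small primes such as $9$ and $11$ before asserting it.
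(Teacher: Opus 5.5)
Your proof is correct, and every intermediate value checks out: $(R_1,c_1)=(17791554,11807)$, $(R_2,c_2)=(35209485366,5574499)$, $(R_3,c_3)=(355369335799038,21053933041)$, and $21053933041\cdot16879=355369335799039=R_3+1$. The telescoping $\D_H(B_4)=\D_{R_3,c_3}(16879)$ is a legitimate repeated use of Lemma~\ref{lem:composition}, since the four factors are distinct primes.

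The paper takes a different route. It evaluates the defect in one step: it computes $B_4=52931284472141$ and $\dd(B_4)=372268700908$, then compares the two seventeen-digit products $797B_4$ and $113322\,\dd(B_4)$. This is exactly the computation you keep only as a cross-check.

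Your sequential route through \eqref{eq:port-transition} has two advantages:
\begin{itemize}
\item Every product stays at fifteen digits or fewer.
\item It yields useful by-products. The values $c_1,c_2,c_3$ are exactly the entries $\D_H(157)$, $\D_H(157\cdot1979)$ and $\D_H(157\cdot1979\cdot10093)$ in the table of Proposition~\ref{prop:H-B4-primitive}. The port $(R_2,c_2)$ is the induced port used in the $t=0$ example. The last step exhibits $(R_3,c_3,16879)$ as a terminal port with $16879=(R_3+1)/c_3$.
\end{itemize}

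The paper's direct check has a different advantage. It does not rely on the composition lemma, and it produces $B_4$ and $\dd(B_4)$ as stand-alone numbers that anyone can recheck from the factorization alone. That is why it serves as the primary certificate there.
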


\begin{proof}
Let
\[
        B_4=157\cdot1979\cdot10093\cdot16879.
\]
Then
\[
        B_4=52931284472141.
\]
Since $B_4$ is squarefree,
\[
        \dd(B_4)=\sum_{p\mid B_4}\frac{B_4}{p}=372268700908.
\]
Furthermore,
\[
        797B_4=42186233724296377,
        \qquad
        113322\dd(B_4)=42186233724296376,
\]
so
\[
        797B_4-113322\dd(B_4)=1.
\]
\end{proof}

\begin{proposition}\label{prop:H-B4-primitive}
The filling
\[
        B_4=157\cdot1979\cdot10093\cdot16879
\]
is port-primitive as a filling of $H$.
\end{proposition}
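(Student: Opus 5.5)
The plan is a finite verification. By Definition~\ref{def:primitive-filling}, we must show that no squarefree $B_0\mid B_4$ with $1<B_0<B_4$ satisfies $797B_0-113322\,\dd(B_0)=1$. Such a $B_0$ is a product of one, two, or three of the primes $157,1979,10093,16879$. That gives $4+6+4=14$ candidates.

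One-prime divisors are excluded exactly as in Proposition~\ref{prop:H-B2-primitive}. A one-prime filling $q$ would satisfy $797q=113323$, and $113323/797$ is not an integer. So it remains to rule out the six two-prime products and the four three-prime products.

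For these ten candidates I would use the necessary congruence \eqref{eq:H-B-congruence} from Proposition~\ref{prop:H-congruences} as a fast sieve. Any filling must satisfy $B_0\equiv 9953\pmod{113322}$. For each candidate I compute $B_0 \bmod 113322$ and observe that it differs from $9953$. The products are at most about $3.4\times10^{12}$, so this is routine arithmetic. If a residue happened to coincide, I would fall back on evaluating $\D_{113322,797}(B_0)$ directly. For $B_0$ a product of the primes in a set $S$, this is $797B_0-113322\sum_{p\in S}B_0/p$, and I would check that it is not $1$.

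As a cross-check, and to make the computation more conceptual, a size argument disposes of several cases at once. By \eqref{eq:port-reciprocal}, a filling needs $\sum_{q\mid B_0}1/q$ just below $797/113322\approx 0.0070331$. For example, any $B_0$ omitting $157$ has $\sum 1/q\le 1/1979+1/10093+1/16879<0.0007$, which is far too small. This eliminates all candidates not containing $157$.

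For the candidates containing $157$, I would compare against $797/113322-1/157\approx 0.000664$. Among the remaining primes, the reciprocals of $1979,10093,16879$ sum to about $0.000664$ only when all three are taken. Two of them fall short: $1/1979+1/10093\approx 0.000604$, and the other pairs are smaller still. This rules out every proper divisor, with the congruence check serving as an independent confirmation. The inequalities must be verified precisely, which can be done by clearing denominators. There is no conceptual obstacle; the only care needed is exact arithmetic in these ten comparisons.
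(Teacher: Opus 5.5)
Your proof is correct, but its decisive step differs from the paper's. The paper simply tabulates $\D_H(D)=797D-113322\,\dd(D)$ for all fourteen proper nontrivial divisors $D\mid B_4$ and observes that none equals $1$. You instead argue by size via \eqref{eq:port-reciprocal}.

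To make your size argument a proof, state the allowed window explicitly rather than saying ``just below.'' A filling $B_0$ satisfies $797/113322-\sum_{q\mid B_0}1/q=1/(113322B_0)<1/113322\approx8.8\times10^{-6}$. Every proper subset falls short by at least about $5.9\times10^{-5}$, so your decimal comparisons are decisive.

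In fact no decimals are needed. Since $B_4$ itself fills $H$, for any proper subset $S$ of $\{157,1979,10093,16879\}$ the shortfall is exactly $\sum_{q\notin S}1/q+1/N_9\ge 1/16879>1/113322$. This settles all fourteen cases at once. More generally, it shows that a filling of $(R,c)$ whose primes are all below $R$ is automatically port-primitive; this is consistent with the inherited filling $B_5$ needing the prime $p_{10}>R$.

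Your route buys brevity and explains why primitivity holds. The paper's table buys explicit defect values that can be checked line by line and are reused elsewhere; for example, $\D_H(157\cdot1979)=5574499$ is the induced $c$ in the $t$-parameter example.

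Your congruence sieve is only announced, not carried out. It does succeed: the ten residues modulo $113322$ are $84059$, $111415$, $43597$, $29375$, $86873$, $36781$, $78995$, $40421$, $108517$, $36875$, none equal to $9953$. Once the size argument is in place, though, the sieve is redundant.

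A harmless slip: the largest proper divisor is $B_4/157\approx3.4\times10^{11}$, not $3.4\times10^{12}$.
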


\begin{proof}
By Definition~\ref{def:primitive-filling}, it is enough to show that no proper nontrivial squarefree divisor of $B_4$ fills $H$.  For a divisor $D$ of $B_4$ write
\[
        \D_H(D)=797D-113322\dd(D).
\]
There are $2^4-2=14$ proper nontrivial divisors to check.  Their defects are listed below:
\begin{center}
\small
\begin{tabular}{lr}
\toprule
$D$ & $\D_H(D)$ \\
\midrule
$157$ & $11807$ \\
$1979$ & $1463941$ \\
$10093$ & $7930799$ \\
$16879$ & $13339241$ \\
$157\cdot1979$ & $5574499$ \\
$157\cdot10093$ & $101376497$ \\
$157\cdot16879$ & $181498799$ \\
$1979\cdot10093$ & $14551292275$ \\
$1979\cdot16879$ & $24485595901$ \\
$10093\cdot16879$ & $132720197375$ \\
$157\cdot1979\cdot10093$ & $21053933041$ \\
$157\cdot1979\cdot16879$ & $58882483255$ \\
$157\cdot10093\cdot16879$ & $1531563738341$ \\
$1979\cdot10093\cdot16879$ & $243347763355591$ \\
\bottomrule
\end{tabular}
\end{center}
None of these values is $1$.  Hence no proper nontrivial divisor of $B_4$ fills $H$, so $B_4$ is port-primitive.
\end{proof}

Thus
\begin{equation}\label{eq:N9-def}
        N_9:=113322B_4=5998279018951962402.
\end{equation}

\section{The nine-prime-factor example}

\begin{theorem}\label{thm:N9}
The integer
\[
        N_9=5998279018951962402
\]
is a primary pseudoperfect number.  Its prime factorization is
\begin{equation}\label{eq:N9-factorization}
        N_9
        =2\cdot3\cdot11\cdot17\cdot101\cdot157\cdot1979\cdot10093\cdot16879.
\end{equation}
\end{theorem}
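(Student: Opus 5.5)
The plan is to obtain the theorem from the port machinery already set up, with a direct numerical check as confirmation. There are three things to establish: the factorization, squarefreeness, and the identity $\dd(N_9)=N_9-1$.

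First, the factorization \eqref{eq:N9-factorization}. We have $113322=2\cdot3\cdot11\cdot17\cdot101$, which follows from the chain $(6,1)\xrightarrow{11}(66,5)\xrightarrow{17}(1122,19)\xrightarrow{101}(113322,797)$. Together with $N_9=113322B_4$ and $B_4=157\cdot1979\cdot10093\cdot16879$, this gives the displayed product. It remains to confirm that each of the nine factors is prime. The five small ones are obvious. For $157,1979,10093,16879$ it suffices to run trial division up to $\sqrt{16879}<130$, a routine finite check that I would record briefly. Since the nine primes are pairwise distinct, $N_9$ is squarefree, and in particular $\gcd(113322,B_4)=1$.

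Second, the pseudoperfect property. I would apply Lemma~\ref{lem:ambient-port} with $R=113322$ and $c=797$. Its hypotheses are:
\begin{itemize}
\item[(a)] $R$ is squarefree;
\item[(b)] $c=R-\dd(R)$;
\item[(c)] $B_4$ is squarefree and coprime to $R$;
\item[(d)] $\D_{R,c}(B_4)=1$.
\end{itemize}
Conditions (a) and (c) come from the factorization step. Condition (d) is Proposition~\ref{prop:H-B4}. For (b), I compute $\dd(113322)=\sum_{p\mid 113322}113322/p = 56661+37774+10302+6666+1122=112525$, so $113322-112525=797$. Alternatively, (b) follows by iterating Lemma~\ref{lem:reachable-coprime} along the chain from $(6,1)$, using $6-\dd(6)=1$. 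The lemma then yields $\dd(N_9)=N_9-1$, which by Section~3 is equivalent to \eqref{eq:ppn-def}.

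Third, as an independent sanity check matching the abstract, I would verify $1+\sum_{p\mid N_9}N_9/p=N_9$ directly. This uses $\dd(N_9)=B_4\dd(113322)+113322\,\dd(B_4)$ with the value $\dd(B_4)=372268700908$ from Proposition~\ref{prop:H-B4}, and confirms that the result equals $5998279018951962401$.

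The only genuine obstacle is the reliability of the large-integer arithmetic: the value of $B_4$, its derivative, and the product $113322B_4$. The proof itself is a one-line invocation of Lemma~\ref{lem:ambient-port}. I would present the intermediate integers explicitly so that the reader can check them by hand or with any exact-arithmetic system.
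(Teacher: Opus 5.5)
Your proposal is correct and follows the paper's proof: write $N_9=113322B_4$, read off the factorization, and apply Lemma~\ref{lem:ambient-port} to the ambient port $H$ with the filling $B_4$ from Proposition~\ref{prop:H-B4}. Your additional checks make explicit hypotheses that the paper leaves implicit: trial division for $157,1979,10093,16879$, the computation $\dd(113322)=112525$, and the coprimality of $113322$ and $B_4$.
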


\begin{proof}
By construction,
\[
        N_9=113322B_4,
\]
where
\[
        113322=2\cdot3\cdot11\cdot17\cdot101
\]
and
\[
        B_4=157\cdot1979\cdot10093\cdot16879.
\]
The prime factorization~\eqref{eq:N9-factorization} follows.

Since $B_4$ fills $H$ and $H$ is an ambient port with $113322-\dd(113322)=797$, Lemma~\ref{lem:ambient-port} gives
\[
        \dd(113322B_4)=113322B_4-1.
\]
Thus
\[
        \dd(N_9)=N_9-1.
\]
Therefore $N_9$ is a primary pseudoperfect number.
\end{proof}

Equivalently, one may verify directly that
\[
        1+\sum_{p\mid N_9}\frac{N_9}{p}=N_9.
\]
This direct integer identity is independent of the search that led to $N_9$ and is the shortest certificate that $N_9$ is a primary pseudoperfect number once the displayed factorization has been verified.

\section{A Pocklington certificate for \texorpdfstring{$N_9+1$}{N9+1}}

Let
\begin{equation}\label{eq:p10-def}
        p_{10}=N_9+1=5998279018951962403.
\end{equation}
Then
\[
        p_{10}-1=N_9.
\]
Using~\eqref{eq:N9-factorization}, the number $p_{10}-1$ is completely factored into primes.

\begin{theorem}\label{thm:p10-prime}
The integer
\[
        p_{10}=5998279018951962403
\]
is prime.
\end{theorem}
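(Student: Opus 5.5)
The plan is to give a Pocklington--Lehmer certificate that exploits the full factorization of $p_{10}-1=N_9$ from~\eqref{eq:N9-factorization}. We use the following form of the Pocklington criterion. Let $n-1=FU$ with $F$ completely factored and $F>\sqrt n$. Suppose that for each prime $q\mid F$ there is a base $a_q$ with
\[
a_q^{\,n-1}\equiv1\pmod n
\quad\text{and}\quad
\gcd\bigl(a_q^{(n-1)/q}-1,\,n\bigr)=1 .
\]
Then every prime divisor $r$ of $n$ satisfies $r\equiv1\pmod F$. Hence $r>\sqrt n$, and $n$ is prime.

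Here we take $n=p_{10}$ and $F=N_9=n-1$, so $U=1$ and the condition $F>\sqrt n$ is trivial. The prime divisors of $F$ are the nine primes
\[
2,\ 3,\ 11,\ 17,\ 101,\ 157,\ 1979,\ 10093,\ 16879 .
\]
Each is small enough that its primality is immediate, for example by trial division up to $\sqrt{16879}<130$.

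The key steps, in order, are as follows.
\begin{enumerate}[label=(\arabic*)]
\item Recall from Theorem~\ref{thm:N9} that~\eqref{eq:N9-factorization} is the complete prime factorization of $p_{10}-1$, and note that all nine factors are prime.
\item Choose a base $a$, trying small integers $a=2,3,5,\dots$ in turn. Check the Fermat condition $a^{p_{10}-1}\equiv1\pmod{p_{10}}$.
\item For each of the nine primes $q$, compute $a^{(p_{10}-1)/q}\bmod p_{10}$ and verify that $\gcd\bigl(a^{(p_{10}-1)/q}-1,\,p_{10}\bigr)=1$.
\item If a single base fails for some $q$, replace it by a different base $a_q$ for that prime only. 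The criterion permits different bases for different $q$.
\end{enumerate}
Ideally one base $a$ is a primitive root modulo $p_{10}$, so that a single $a$ works for every $q$. The certificate is then just the list of these ten residues together with the nine gcd values.

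From these data the conclusion follows. For any prime $r\mid p_{10}$, the order of $a_q$ modulo $r$ divides $p_{10}-1$ but does not divide $(p_{10}-1)/q$. Hence the full power $q^{v_q(N_9)}=q$ divides $r-1$, since $N_9$ is squarefree. Taking the product over all $q$ gives $N_9\mid r-1$, so $r\ge N_9+1=p_{10}$. Therefore $r=p_{10}$, and $p_{10}$ is prime.

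The main obstacle is not conceptual but computational. The modular exponentiations with exponents of size about $6\times10^{18}$ modulo a $19$-digit number must be carried out reliably, and the paper should record the chosen base and the resulting residues so that a reader can recheck them independently. I would first try $a=2$, and move to $a=3$ or $a=5$ only if some $q$, most plausibly $q=2$ via quadratic residuosity, fails.
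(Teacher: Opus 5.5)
Your proposal is correct and is the paper's own argument: Pocklington's criterion using the complete factorization $p_{10}-1=N_9$ from~\eqref{eq:N9-factorization}, where the paper takes the single base $a=3$ for all nine prime divisors $q$. One aside: $p_{10}\equiv 3\pmod 8$ and $p_{10}\equiv 1\pmod 3$, so both $2$ and $3$ are quadratic nonresidues modulo the prime $p_{10}$, and the $q=2$ check you expected to fail in fact passes for either base.
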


\begin{proof}
We use Pocklington's criterion (in its standard form; see, for example, \cite[Sec.~3.4]{CrandallPomerance}) with base $a=3$.
The following congruence holds:
\[
        3^{p_{10}-1}\equiv1\pmod {p_{10}}.
\]
Moreover, for each prime divisor
\[
        q\in\{2,3,11,17,101,157,1979,10093,16879\}
\]
of $p_{10}-1$, we have
\[
        \gcd\left(3^{(p_{10}-1)/q}-1,p_{10}\right)=1.
\]
Since the prime factorization of $p_{10}-1$ is complete, Pocklington's criterion proves that $p_{10}$ is prime.
\end{proof}

The certificate is summarized in Table~\ref{tab:pocklington}.

\begin{center}
\begin{tabular}{cc}
\toprule
$q\mid p_{10}-1$ & $\gcd(3^{(p_{10}-1)/q}-1,p_{10})$ \\
\midrule
2 & 1 \\
3 & 1 \\
11 & 1 \\
17 & 1 \\
101 & 1 \\
157 & 1 \\
1979 & 1 \\
10093 & 1 \\
16879 & 1 \\
\bottomrule
\end{tabular}
\captionof{table}{Pocklington data for $p_{10}=N_9+1$.}
\label{tab:pocklington}
\end{center}

\section{The ten-prime-factor example}

\begin{theorem}\label{thm:N10}
The integer
\[
        N_{10}=35979351189199316534587473905773572006
\]
is a primary pseudoperfect number.  It factors as
\[
\begin{aligned}
        N_{10}
        &=2\cdot3\cdot11\cdot17\cdot101\cdot157\cdot1979\cdot10093\cdot16879\\
        &\quad\cdot5998279018951962403.
\end{aligned}
\]
\end{theorem}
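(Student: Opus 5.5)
The plan is to apply Corollary~\ref{cor:one-prime-inheritance} with $K=N_9$. Two facts are needed, and both are already in hand. First, Theorem~\ref{thm:N9} shows that $N_9$ is a primary pseudoperfect number. Second, Theorem~\ref{thm:p10-prime} shows that $N_9+1=p_{10}$ is prime. The corollary then yields that $N_9(N_9+1)=N_9p_{10}$ is a primary pseudoperfect number.

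Coprimality and squarefreeness are automatic here. Since $p_{10}>N_9$, the prime $p_{10}$ does not divide $N_9$. So $N_9p_{10}$ is squarefree, and $p_{10}$ is coprime to $N_9$, as Lemma~\ref{lem:inheritance-equation} requires. In fact this is already implicit in the corollary, because $\gcd(K,K+1)=1$.

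It remains to identify the integer and its factorization. The factorization follows by combining \eqref{eq:N9-factorization} with the primality of $p_{10}$, which gives the ten distinct primes displayed in the statement. The decimal value
\[
N_{10}=5998279018951962402\cdot5998279018951962403=35979351189199316534587473905773572006
\]
is a routine multiplication. I would check it by computing $N_9^2+N_9$, or by a quick modular sanity check, for instance reducing both sides modulo $9$ and modulo $11$.

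There is no real obstacle. All the substantive work (the filling $B_4$ of $H$, Lemma~\ref{lem:ambient-port}, and the Pocklington certificate) has already been done. The only step needing care is the big-integer multiplication, and it should be verified independently.
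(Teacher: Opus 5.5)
Your proposal is correct and is the paper's argument: Theorem~\ref{thm:N9} shows that $N_9$ is primary pseudoperfect, Theorem~\ref{thm:p10-prime} shows that $N_9+1$ is prime, and Corollary~\ref{cor:one-prime-inheritance} then gives that $N_{10}=N_9(N_9+1)$ is primary pseudoperfect. Your remarks on coprimality, the factorization, and checking the decimal product are sound additions that the paper leaves implicit.
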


\begin{proof}
By Theorem~\ref{thm:N9}, $N_9$ is a primary pseudoperfect number.  By Theorem~\ref{thm:p10-prime}, $p_{10}=N_9+1$ is prime.  Therefore Corollary~\ref{cor:one-prime-inheritance} gives that
\[
        N_{10}=N_9p_{10}=N_9(N_9+1)
\]
is a primary pseudoperfect number.
\end{proof}

\begin{remark}
The congruence pattern modulo $288$ is consistent with the previously observed pattern for known examples:
\[
        N_9\equiv258\pmod{288},
        \qquad
        N_{10}\equiv6\pmod{288}.
\]
\end{remark}

\section{The inherited filling \texorpdfstring{$B_5$}{B5} of the key port}

The ten-prime-factor example can also be expressed inside the same key port $H$.  Since
\[
        N_9=113322B_4
\]
and $p_{10}=N_9+1$ is prime, the product
\[
        B_5=B_4p_{10}
\]
is another filling of $H$.  Indeed, using the product rule for the arithmetic derivative,
\[
\begin{aligned}
        797B_4p_{10}-113322\dd(B_4p_{10})
        &=p_{10}\bigl(797B_4-113322\dd(B_4)\bigr)-113322B_4\dd(p_{10})\\
        &=p_{10}-N_9\\
        &=1.
\end{aligned}
\]
Thus the port $H$ has at least the following fillings:
\[
        149\cdot3109,
\]
\[
        157\cdot1979\cdot10093\cdot16879,
\]
and
\[
        157\cdot1979\cdot10093\cdot16879\cdot5998279018951962403.
\]
This observation is useful because it distinguishes port-primitive fillings from inherited ones within a fixed port.

\section{No one-prime or two-prime successor for \texorpdfstring{$N_{10}$}{N10}}

The one-prime successor fails because
\[
\begin{aligned}
        N_{10}+1
        &=35979351189199316534587473905773572007\\
        &=7\cdot37\cdot73\cdot407221\cdot2746750419901\cdot1701301706648581.
\end{aligned}
\]
This is a complete prime factorization, and hence $N_{10}+1$ is composite.  The verification code in the appendix records the computational check used here; independent primality certificates for the displayed large factors may be included in the accompanying verification archive.

For two-prime inheritance, Corollary~\ref{cor:two-prime-inheritance} requires
\[
        (p-N_{10})(q-N_{10})=N_{10}^2+1.
\]
We have the complete prime factorization
\[
\begin{aligned}
        N_{10}^2+1
        &=21807157\cdot480382349\\
        &\quad\cdot123572138719194583969192220095883252267503088389616114960309,
\end{aligned}
\]
where all three displayed factors were verified prime computationally.  Therefore the possible divisors $d\le\sqrt{N_{10}^2+1}$ are
\[
        1,
        \quad
        21807157,
        \quad
        480382349,
        \quad
        10475773304671793.
\]
For these four cases, the candidate $N_{10}+d$ is composite as follows:
\begin{center}
\begin{tabular}{cc}
\toprule
$d$ & reason $N_{10}+d$ is composite \\
\midrule
$1$ & divisible by $7$ \\
$21807157$ & divisible by $7$ \\
$480382349$ & divisible by $5$ \\
$10475773304671793$ & divisible by $2141$ \\
\bottomrule
\end{tabular}
\end{center}
Thus $N_{10}$ has neither a one-prime nor a two-prime inherited successor.

\section{Relation with the Sondow--MacMillan conjectures}\label{sec:sondow-macmillan}

Sondow and MacMillan observed that the known primary pseudoperfect numbers $K_r$ with $2\le r\le8$ have residues modulo $288=6^2\cdot8$ forming the arithmetic progression
\[
        K_r\equiv 6+6^2(r-2)\pmod{288}
        \qquad (2\le r\le8).
\]
They conjectured that there exists exactly one primary pseudoperfect number $K_9$ with nine prime factors, satisfying
\[
        K_9\equiv258\pmod{288},
\]
and that no further primary pseudoperfect numbers exist~\cite{SondowMacMillan}.

Our number $N_9$ is consistent with the predicted ninth residue:
\[
        N_9\equiv258\pmod{288}.
\]
Thus the present example supports the $r=9$ residue prediction.  However, the one-prime inherited example $N_{10}=N_9(N_9+1)$ is a primary pseudoperfect number with ten prime factors.  Therefore the existence of $N_{10}$ disproves the ``no further PPNs exist'' clause of Sondow and MacMillan's Conjecture~1.  It does not disprove the uniqueness assertion for a nine-prime-factor example; this paper proves existence of one such example, not uniqueness.

The congruence of $N_{10}$ also matches the continuation suggested in their weaker Conjecture~2:
\[
        N_{10}\equiv6\pmod{288},
\]
since
\[
        6+6^2(10-2)=294\equiv6\pmod{288}.
\]

\section{The last-two-prime discriminant}

We next isolate the general final step of filling a port.
Suppose that at a port $(R,c)$ the last two primes are $u<v$.  Then
\begin{equation}\label{eq:last-two-basic}
        cuv-R(u+v)=1.
\end{equation}
Put
\[
        P=uv,
        \qquad
        S=u+v.
\]
Then
\[
        cP-RS=1.
\]
Assume $\gcd(c,R)=1$, as happens in all ports reached by appending primes not dividing the current modulus.  Let
\[
        P_0\equiv c^{-1}\pmod R,
        \qquad
        1\le P_0<R.
\]
Then all possible products have the form
\[
        P=P_0+tR,
        \qquad
        t\ge0.
\]
Let
\[
        S_0=\frac{cP_0-1}{R}.
\]
Then
\[
        S=S_0+ct.
\]
Therefore $u$ and $v$ are the roots of
\[
        X^2-SX+P=0.
\]
The discriminant is
\begin{equation}\label{eq:D-t}
        \boxed{D(t)=(S_0+ct)^2-4(P_0+tR).}
\end{equation}

\begin{theorem}[last-two-prime discriminant criterion]\label{thm:last-two-discriminant}
The port $(R,c)$ can be completed by two primes $u<v$ greater than the current last prime $m$ if and only if there exists an integer $t\ge0$ such that $D(t)$ is a square, the numbers
\[
        u=\frac{S_0+ct-\sqrt{D(t)}}2,
        \qquad
        v=\frac{S_0+ct+\sqrt{D(t)}}2
\]
are primes, and $u>m$.
\end{theorem}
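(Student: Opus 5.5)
The proof is a two-way translation between the Diophantine equation~\eqref{eq:last-two-basic} for the pair $(u,v)$ and the parametrization of products by $t$. Throughout we use the standing assumption $\gcd(c,R)=1$, which makes $P_0$ well defined. We also take ``completed by two primes'' to mean that \eqref{eq:last-two-basic} holds with $u<v$ primes exceeding $m$. By the composition law (Lemma~\ref{lem:composition}), this says exactly that $\D_{R,c}(uv)=1$, where $\dd(uv)=u+v$ since $u\ne v$.

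\emph{Forward direction.} Suppose primes $m<u<v$ satisfy $cuv-R(u+v)=1$, and set $P=uv$, $S=u+v$. Reducing modulo $R$ gives $cP\equiv1\pmod R$, so $P\equiv P_0\pmod R$. Since $P>0$ and $1\le P_0<R$, we can write $P=P_0+tR$ with $t\ge0$ an integer; here we use that $P_0$ is the least positive representative.

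We check that $S_0$ is an integer. Indeed $cP_0\equiv1\pmod R$ gives $R\mid cP_0-1$. Then
\[
RS=cP-1=cP_0-1+ctR=R(S_0+ct),
\]
so $S=S_0+ct$. Now $u,v$ are the roots of $X^2-SX+P$, so $D(t)=S^2-4P=(v-u)^2$ is a perfect square. Since $v>u$, we have $\sqrt{D(t)}=v-u$, and therefore $(S\mp\sqrt{D(t)})/2$ equal $u$ and $v$ respectively. These are primes with $u>m$, as required.

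\emph{Converse.} Suppose $t\ge0$ is such that $D(t)=\delta^2$ with $\delta\ge0$, and the displayed $u,v$ are primes with $u>m$. Put $S=S_0+ct$ and $P=P_0+tR$. Then $u+v=S$ and
\[
uv=\frac{S^2-\delta^2}{4}=P.
\]
Hence
\[
cuv-R(u+v)=cP_0+ctR-RS_0-Rct=cP_0-RS_0=1,
\]
by the definition of $S_0$. It remains to check that $u<v$ are two distinct primes, so that $uv$ is squarefree and $\dd(uv)=u+v$.

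This last check is the only delicate point. We need to exclude $\delta=0$, that is, $u=v$. If $u=v=p$, the equation reads $cp^2-2Rp=1$, so $p\mid1$, which is impossible. Hence $\delta>0$ and $u<v$.

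We also need $u,v$ to be integers. This holds because $S\equiv\delta\pmod 2$, which follows from $S^2-\delta^2=4P$; in any case it is implicit in the hypothesis that $u$ and $v$ are primes.

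Finally, $u>m$ is assumed, and then $v>u>m$. So $uv$ fills $(R,c)$ with both new primes beyond $m$, which gives the criterion of Theorem~\ref{thm:last-two-discriminant}.
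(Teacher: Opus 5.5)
Your proof is correct and takes the same route as the paper: necessity comes from the parametrization $P=P_0+tR$, $S=S_0+ct$ with $D(t)=(v-u)^2$, and sufficiency comes from Vieta's relations together with $cP_0-RS_0=1$. Your extra checks fill in points the paper's two-line proof leaves tacit: $t\ge 0$ is forced by $P>0$, $S_0$ is an integer, and $D(t)=0$ is impossible because $cp^2-2Rp=1$ would give $p\mid 1$.
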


\begin{proof}
The derivation above proves necessity.  Conversely, if such a $t$ exists, then $P=P_0+tR$ and $S=S_0+ct$ satisfy $cP-RS=1$, and the displayed roots satisfy $uv=P$ and $u+v=S$.  Hence~\eqref{eq:last-two-basic} holds.
\end{proof}

\section{A finite upper bound for the parameter \texorpdfstring{$t$}{t}}

Let
\[
        U=\max\left(m+1,\left\lfloor\frac Rc\right\rfloor+1\right).
\]
Then any final prime $u$ must satisfy $u\ge U$.  Since
\[
        X^2-SX+P=(X-u)(X-v),
\]
and $U\le u<v$, we have
\[
        U^2-SU+P\ge0.
\]
Substituting $S=S_0+ct$ and $P=P_0+tR$, we get
\[
        U^2-(S_0+ct)U+(P_0+tR)\ge0.
\]
Equivalently,
\[
        U^2-S_0U+P_0+t(R-cU)\ge0.
\]
Since $U>R/c$, the coefficient $R-cU$ is negative.  Hence
\begin{equation}\label{eq:t-upper-bound}
        \boxed{
        0\le t\le
        \left\lfloor
        \frac{U^2-S_0U+P_0}{cU-R}
        \right\rfloor.
        }
\end{equation}
This turns the final two-prime problem into a finite square-discriminant problem.

\section{Examples of the parameter \texorpdfstring{$t$}{t}}

For the port $H=(113322,797)$ itself, the two-prime filling $149,3109$ has
\[
        P=149\cdot3109=463241.
\]
Here
\[
        P_0=9953,
        \qquad
        R=113322,
\]
and
\[
        463241=9953+4\cdot113322.
\]
Thus $t=4$.

For the filling
\[
        157\cdot1979\cdot10093\cdot16879,
\]
take the prefix
\[
        A=157\cdot1979.
\]
The induced port is
\[
        R=35209485366,
        \qquad
        c=5574499.
\]
The final two primes satisfy
\[
        10093\cdot16879=170359747.
\]
But
\[
        170359747\equiv c^{-1}\pmod R.
\]
Hence this example has $t=0$.

\section{Modular square sieve}

For a fixed prefix, the polynomial $D(t)$ in~\eqref{eq:D-t} is quadratic in~$t$.  If $D(t)$ is an integer square, then for every prime~$\ell$, the residue $D(t)\bmod\ell$ is a quadratic residue modulo~$\ell$.

Let
\[
        Q_\ell=\{x^2\bmod\ell:x\in\Z/\ell\Z\}.
\]
Define
\[
        E_\ell=\{t\bmod\ell:D(t)\in Q_\ell\}.
\]
For a product $M=\ell_1\cdots\ell_s$ of small primes, the Chinese remainder theorem combines the restrictions
\[
        t\bmod\ell_i\in E_{\ell_i}
\]
into a set
\[
        E_M\subseteq\Z/M\Z.
\]
If the finite interval~\eqref{eq:t-upper-bound} contains no integer in a class from $E_M$, then the prefix cannot be completed by two primes.  This gives a compact exclusion certificate.

\begin{example}[a concrete exclusion certificate]\label{ex:certificate}
Consider the four-prime prefix
\[
        A=409\cdot419\cdot457\cdot81199
\]
inside the six-prime filling problem for $H$.  Then
\[
        A=6359225299853,
\]
\[
        R=113322A=720640129429941666,
\]
\[
        c=797A-113322\dd(A)=673363850881.
\]
We compute
\[
        P_0=695935036388423125,
        \qquad
        S_0=650279490314.
\]
The lower bound for the smaller final prime is
\[
        U=1070210,
\]
and the bound~\eqref{eq:t-upper-bound} gives $T=0$.  Thus only $t=0$ must be checked.  The discriminant is
\[
        D(0)=422860631782890066126096.
\]
Modulo $11$,
\[
        D(0)\equiv10\pmod{11}.
\]
But the quadratic residues modulo~$11$ are
\[
        \{0,1,3,4,5,9\}.
\]
Therefore $D(0)$ is not a square, and this prefix cannot be completed by two primes.
\end{example}

\section{A conditional infinitude criterion}\label{sec:conditional-infinitude}

We now record the conditional input under which the port construction becomes infinite.  The hypothesis is intentionally narrow.  It is a Hardy--Littlewood--Bateman--Horn type prime-points hypothesis for the five-splitting hypersurfaces below.  It is not a theorem and is not a formal consequence of the classical one-variable Bateman--Horn conjecture.

\begin{definition}[terminal port]
A triple $(R,c,p)$ is a \emph{terminal port} if $cp-R=1$ and $p$ is prime.  It is \emph{ambient} if, in addition, $c=R-\dd(R)$.
\end{definition}

An ambient terminal port gives a one-prime filling: $p$ fills $(R,c)$ and $Rp$ is a primary pseudoperfect number.

For a terminal port $(R,c,p)$ define
\begin{equation}\label{eq:five-split-hypersurface}
        F_{R,c}(x_1,\ldots,x_5)
        =c x_1x_2x_3x_4x_5
        -R\sum_{i=1}^5\prod_{j\ne i}x_j
        -1.
\end{equation}
A prime point on $F_{R,c}=0$ replaces the terminal prime $p$ by five new primes.

\begin{hypothesis}[five-splitting Hardy--Littlewood--Bateman--Horn prime-points hypothesis]\label{hyp:HLPP5}
Let $(R,c,p)$ be a terminal port with $p>3$.  Suppose that $F_{R,c}=0$ has an unbounded smooth positive real component on which all coordinates are greater than $p$, and that for every prime $\ell$ the congruence $F_{R,c}(x_1,\ldots,x_5)=0$ has a solution in $(\mathbb F_\ell^\times)^5$.  Then that real component contains a point whose coordinates are pairwise distinct primes, all greater than $p$.
\end{hypothesis}

This is the only unproved input in the conditional part of the paper.  It should be regarded as a special prime-points hypothesis in the Hardy--Littlewood/Bateman--Horn tradition.  Stronger formulations would predict an asymptotic count with singular integral and singular series factors.  We use only the existence assertion.

\begin{lemma}[finite local solubility]\label{lem:five-local}
Let $(R,c,p)$ be a terminal port with $p>3$.  Then $F_{R,c}=0$ has a solution in $(\mathbb F_\ell^\times)^5$ for every prime $\ell$.
\end{lemma}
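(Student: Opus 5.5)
The plan is to exploit the single relation $R=cp-1$ coming from the terminal condition, and to split according to the size of $\ell$ and whether $\ell\mid R$. Throughout, write $F=F_{R,c}$ and note that $\sum_i\prod_{j\ne i}x_j=e_4(x_1,\dots,x_5)$.

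\emph{Small primes $\ell=2,3$.} For $\ell=2$ the only point of $(\mathbb F_2^\times)^5$ is $(1,\dots,1)$, where
\[
F(1,\dots,1)=c-5R-1\equiv c+R+1\pmod 2.
\]
Since $p$ is odd, $R=cp-1\equiv c-1\pmod 2$, so this value is $\equiv 2c\equiv0$.

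For $\ell=3$ I take points with coordinates in $\{\pm1\}$. If $k$ coordinates equal $-1$, then $\prod x_i=(-1)^k$ and $e_4=(-1)^k(5-2k)$. Reducing mod $3$, the values of $F$ for $k=0,1,2,4$ are respectively
\[
c+R-1,\qquad -c-1,\qquad c-R-1,\qquad c-1 .
\]
If $c\equiv1$, take $k=4$. If $c\equiv-1$, take $k=1$. If $c\equiv0$, then $R=cp-1\equiv-1$, and $k=2$ gives $-R-1\equiv0$. Here $p>3$ is not even needed; only $3\nmid$ nothing is assumed.

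\emph{Primes $\ell\ge5$ with $\ell\mid R$.} From $cp\equiv1\pmod\ell$, $c$ is a unit mod $\ell$, and $F\equiv c\prod x_i-1$. Take $x_1=c^{-1}$ and $x_2=\dots=x_5=1$.

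\emph{Primes $\ell\ge5$ with $\ell\nmid R$.} I freeze $x_3=x_4=x_5=1$ and expand $e_4$ as
\[
e_4=x_1x_2\,e_2(x_3,x_4,x_5)+(x_1+x_2)\,x_3x_4x_5 .
\]
This gives
\[
F=\alpha x_1x_2-\beta(x_1+x_2)-1,\qquad \alpha=c-3R,\quad \beta=R\ne0 .
\]
Solving for $x_2$ gives
\[
x_2=\frac{\beta x_1+1}{\alpha x_1-\beta}.
\]
I need $x_1\in\mathbb F_\ell^\times$ with $\alpha x_1\ne\beta$ (so the denominator is nonzero) and $\beta x_1\ne-1$ (so $x_2\ne0$). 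Each condition excludes at most one value of $x_1$. Since $\ell-1\ge4>2$, a valid $x_1$ exists, and then $x_2\in\mathbb F_\ell^\times$.

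The only delicate step is the case $\ell=3$, where the torus is too small for the counting argument. That is why the explicit sign-pattern table is needed there, and it is exactly where the relation $R\equiv cp-1$ enters. Away from $\ell=2$, the hypothesis $p>3$ is not used essentially; at $\ell=2$ only the oddness of $p$ matters.
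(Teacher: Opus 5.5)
Your proof is correct, and it takes a genuinely different route from the paper's; I checked the parity computation at $\ell=2$, the sign-pattern table at $\ell=3$, and the fibration step.

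\textbf{The paper's route.} The paper observes that $(p,1,-1,1,-1)$ is an honest integer point of $F_{R,c}=0$. The four auxiliary coordinates cancel in pairs, so $\prod x_i=p$ and $\sum_i\prod_{j\ne i}x_j=1$, giving $F=cp-R-1=0$. Reducing this point modulo $\ell$ settles every $\ell\ne p$ at once. Only $\ell=p$ needs a separate construction. There the paper passes to reciprocal variables and uses $R\equiv-1\pmod p$ to rewrite the equation as $y_1\cdots y_5-(y_1+\cdots+y_5)=c$. It then takes $(c/3,1,-1,2,-2)$, or $(1,1,-1,1,-1)$ when $c\equiv0$. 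This is where $p>3$ enters, to invert $2$ and $3$.

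\textbf{Your route.} You work prime by prime instead. For $\ell\ge5$, you freeze $x_3=x_4=x_5=1$, solve the bilinear equation for $x_2$, and discard at most two values of $x_1$. That step needs only $\gcd(c,R)=1$. So the full relation $cp-R=1$ is used only at $\ell=2,3$, where you check the finitely many points by hand.

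\textbf{Comparison.} The paper's argument is shorter and more conceptual: its global point is just the terminal prime $p$ written as a degenerate five-splitting. Yours is more mechanical, but it shows exactly where a local obstruction could occur. It also shows the lemma holds whenever $p$ is odd, so $p=3$ is allowed too. This is sharp: for $p=2$ and $c$ odd, the only point $(1,\dots,1)$ of $(\mathbb F_2^\times)^5$ gives $F\equiv c\not\equiv0\pmod 2$.

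One small fix: the garbled phrase ``only $3\nmid$ nothing is assumed'' should simply be deleted.
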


\begin{proof}
If $\ell\ne p$, take $(x_1,x_2,x_3,x_4,x_5)=(p,1,-1,1,-1)$ in $(\mathbb F_\ell^\times)^5$.  Then $x_1x_2x_3x_4x_5=p$ and $\sum_i\prod_{j\ne i}x_j=1$, so $c x_1\cdots x_5-R\sum_i\prod_{j\ne i}x_j=cp-R=1$.

It remains to treat $\ell=p$.  Put $y_i=x_i^{-1}$.  Dividing by $x_1x_2x_3x_4x_5$, and using $R\equiv -1\pmod p$, the equation becomes
\[
        y_1y_2y_3y_4y_5-(y_1+\cdots+y_5)=c
        \quad\text{in }\mathbb F_p.
\]
If $c\not\equiv0\pmod p$, take
\[
        (y_1,y_2,y_3,y_4,y_5)=(c/3,1,-1,2,-2).
\]
Since $p>3$, all entries are nonzero and the displayed expression equals $4c/3-c/3=c$.  If $c\equiv0\pmod p$, take
\[
        (y_1,y_2,y_3,y_4,y_5)=(1,1,-1,1,-1),
\]
which gives $0$.
\end{proof}

\begin{lemma}[positive real component]\label{lem:five-real}
Let $(R,c,p)$ be a terminal port with $R>4$.  Then $F_{R,c}=0$ has an unbounded smooth positive real component on which all coordinates are greater than $p$.
\end{lemma}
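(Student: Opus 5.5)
We work in reciprocal coordinates. For positive $x_i$, divide $F_{R,c}=0$ by $R x_1\cdots x_5$ to get
\[
        \sum_{i=1}^5\frac1{x_i}+\frac1{R\,x_1\cdots x_5}=\frac cR .
\]
Because $cp-R=1$, we have $c/R=1/p+1/(Rp)$, which is slightly larger than $1/p$. The plan is to exhibit an explicit smooth, unbounded family of positive real solutions with every $x_i>p$. The component containing that family is then the required one.

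\textbf{Construction.} Fix the first four coordinates large and let the last one be determined. Take $x_1=x_2=x_3=x_4=s$ with $s>4p$ (the same works with distinct values, e.g.\ $s, s+1, s+2, s+3$). Solving $F_{R,c}=0$, which is linear in $x_5$, gives
\[
        x_5=\frac{R s^3+1}{c s^4-4R s^3}=\frac{Rs^3+1}{s^3(cs-4R)} .
\]
This is defined and positive as long as $cs>4R$. Since $c/R>1/p$, it suffices to have $s\ge 4p$. It then remains to check that $x_5>p$. Equivalently, we need $\frac{4}{s}+\frac1{x_5}+\frac{1}{Rs^4x_5}=\frac cR$ with $1/x_5<1/p$.

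Here $4/s\le 1/p$ is too crude, because then $1/x_5\approx c/R-4/s$ would exceed $1/(Rp)$ and need not be below $1/p$. So we instead choose $s$ with $4/s$ only slightly less than $c/R-1/p=1/(Rp)$ is wrong. Redoing the bookkeeping: we need $1/x_5<1/p$, that is, $4/s>c/R-1/p=1/(Rp)$, so $s<4Rp$. We also need $s>p$ and $4/s<c/R$, that is, $s>4R/c\approx 4p$. The admissible window is therefore $4R/c<s<4Rp$, up to the tiny correction term. This window is nonempty because $R>4$: $4Rp$ exceeds $4R/c\approx 4p$ by a factor of about $R$. Inside the window we get $x_5>p$, but $x_5$ stays bounded. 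Hence this slice is not unbounded by itself.

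\textbf{Unboundedness.} To reach infinity, we let $x_5\to\infty$ and compensate with the other four coordinates. Since $4\cdot\frac1{4p}=\frac1p<\frac cR$, choose $x_1=\dots=x_4=s$ with $s$ near $4R/c$. Then $x_5\to+\infty$ as $s\downarrow s_*$, where $s_*$ is the root of $cs=4R$ corrected slightly by the $1/(Rs^4x_5)$ term. This is still valid with all coordinates exceeding $p$, because $s_*\approx 4p>p$ and $x_5$ is large. The curve $s\mapsto(s,s,s,s,x_5(s))$ for $s\in(s_*,4Rp)$ is therefore a continuous path of positive solutions. Along it $x_5\to\infty$ and every coordinate exceeds $p$. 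Smoothness follows because $\partial F/\partial x_5=c x_1x_2x_3x_4-R\sum_{i\le4}\prod_{j\ne i,5}x_j=s^3(cs-4R)>0$ on this path. Near the path the zero set is therefore a smooth graph over $(x_1,\dots,x_4)$, defined on an open set of values with all $x_i>p$ and $\sum_{i\le 4}1/x_i<c/R$. That open region is connected and contains points where $x_5>p$ and $x_5$ is arbitrarily large.

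\textbf{Main obstacle.} The delicate step is verifying $x_5>p$ on a region that is both connected and unbounded. This needs the strict inequality $\sum_{i\le4}1/x_i>1/(Rp)-\text{(tiny)}$. The region $\{x_i>p,\ 1/(Rp)\lesssim\sum_{i\le4}1/x_i<c/R\}$ is a connected shell, since it is an open slab between two level sets of a monotone function. Its nonemptiness uses $c/R-1/(Rp)=1/p$, together with $1/p<4/p$. I expect the hypothesis $R>4$ to enter exactly here, in guaranteeing that the window is nonempty. I will handle the correction term $1/(Rx_1\cdots x_5)$ by a monotonicity argument in $x_5$.
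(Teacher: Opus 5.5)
Your argument is essentially the paper's: the paper also sets four coordinates equal (in reciprocal form $y_1=\dots=y_4=s$) and lets $y_5\to0^+$, so the common coordinate tends to $4R/c=4Rp/(R+1)>p$; it then gets smoothness from a nonvanishing partial derivative, which is exactly your curve $s\mapsto(s,s,s,s,x_5(s))$ with $s\downarrow 4R/c$ and $\partial F/\partial x_5=s^3(cs-4R)>0$. Do clean up the slips: the correct solution is $x_5=(Rs^4+1)/\bigl(s^3(cs-4R)\bigr)$, so $x_5$ is \emph{not} bounded on your window but blows up exactly at $s=4R/c$ with no correction term; the window is nonempty simply because $cp=R+1>1$, not because of $R>4$; and near the pole the only inequality needed is $4R/c>p$ (true for $R>1/3$), so the ``main obstacle'' paragraph is unnecessary.
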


\begin{proof}
Since $cp-R=1$, $c/R=1/p+1/(Rp)$.  In reciprocal variables $y_i=1/x_i$, the positive part of the hypersurface is
\begin{equation}\label{eq:reciprocal-five}
        y_1+\cdots+y_5+\frac1R y_1y_2y_3y_4y_5=\frac cR.
\end{equation}
Choose $y_5>0$ small and put $y_1=y_2=y_3=y_4=s$.  Then
\[
        4s+y_5+\frac1R s^4y_5=\frac cR.
\]
For all sufficiently small $y_5>0$ this equation has a positive solution $s=s(y_5)$, with $s(y_5)\to c/(4R)=1/(4p)+O(1/(Rp))$.  Thus, for $y_5$ small enough, all $0<y_i<1/p$, and hence all $x_i=1/y_i$ exceed $p$.  As $y_5\to0^+$, the coordinate $x_5$ tends to infinity.  The derivative with respect to $s$ of the left-hand side is $4+4s^3y_5/R>0$, so the component is smooth near these points.  The reciprocal change of variables is nonsingular in the positive orthant.
\end{proof}

\begin{theorem}[conditional infinitude under Hypothesis~\ref{hyp:HLPP5}]\label{thm:conditional-infinitude}
Assume Hypothesis~\ref{hyp:HLPP5}.  Then there are infinitely many primary pseudoperfect numbers.
\end{theorem}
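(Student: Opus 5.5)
We prove the theorem by an explicit recursion that turns one ambient terminal port into a strictly larger one, each step producing a new primary pseudoperfect number.

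\textbf{Base case.} Start from the ambient terminal port $(R_0,c_0,p_0)=(42,1,43)$. It arises from $6\xrightarrow{7}42$ with $c_0=42-\dd(42)=42-41=1$, and $c_0p_0-R_0=43-42=1$. Here $p_0=43>3$ and $R_0=42>4$, so Lemmas~\ref{lem:five-local} and~\ref{lem:five-real} apply. (Equally, $N_9$ with $p_{10}$ gives the terminal port $(N_9,1,p_{10})$.)

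\textbf{Inductive step.} Let $(R,c,p)$ be an ambient terminal port with $R$ squarefree, $p\nmid R$, $p>3$, $R>4$, and $Rp$ a primary pseudoperfect number.
\begin{enumerate}[label=(\alph*)]
\item Lemmas~\ref{lem:five-local} and~\ref{lem:five-real} verify the hypotheses of Hypothesis~\ref{hyp:HLPP5}.
\item The hypothesis then gives pairwise distinct primes $q_1,\dots,q_5>p$ with $F_{R,c}(q_1,\dots,q_5)=0$. For $B=q_1\cdots q_5$ we have $\dd(B)=\sum_i\prod_{j\ne i}q_j$, so $F_{R,c}=0$ says exactly $\D_{R,c}(B)=1$.
\item We need $\gcd(B,R)=1$ before applying Lemma~\ref{lem:ambient-port}. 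By Lemma~\ref{lem:port-znam}, each $q_i$ divides $R(B/q_i)+1$, so $q_i\nmid R$.
\item Lemma~\ref{lem:ambient-port} then shows that $RB$ is a primary pseudoperfect number, and it has more prime factors than $Rp$.
\end{enumerate}

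\textbf{Continuing the recursion.} To iterate we need a new terminal port, i.e.\ a fresh terminal prime. Put $R'=Rq_1q_2q_3q_4$ and $c'=R'-\dd(R')$. The composition law, Lemma~\ref{lem:composition}, with $A=q_1\cdots q_4$ gives
\[
\D_{R,c}(B)=\D_{RA,\D_{R,c}(A)}(q_5)=c'q_5-R'=1 .
\]
Here $\D_{R,c}(A)=c'$: since $c=R-\dd(R)$, the computation in the proof of Lemma~\ref{lem:ambient-port} gives $RA-\dd(RA)=\D_{R,c}(A)$, the same identity iterated in Lemma~\ref{lem:reachable-coprime}. So $(R',c',q_5)$ is an ambient terminal port. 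It satisfies $q_5>p>3$, $R'>R>4$, $R'$ is squarefree, and $q_5\nmid R'$ by distinctness and step~(c).

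\textbf{Conclusion.} Iterating produces primary pseudoperfect numbers $R^{(k)}q^{(k)}$ whose number of distinct prime factors increases by four at each step. They are therefore pairwise distinct, so there are infinitely many.

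The main obstacle is in the recursion step: confirming that the new port is again ambient and terminal, so the hypothesis can be reapplied indefinitely. This reduces to the identity $\D_{R,c}(A)=RA-\dd(RA)$ for ambient $(R,c)$ together with $q_5\nmid R'$, both handled above.
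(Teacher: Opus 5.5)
Your proof is correct and follows the paper's argument essentially step for step. Both use Hypothesis~\ref{hyp:HLPP5} to split the terminal prime into five primes, get a primary pseudoperfect number from Lemma~\ref{lem:ambient-port}, and use the composition law to show that $(RA,\,RA-\dd(RA),\,q_5)$ is again an ambient terminal port.

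The differences are minor refinements. Starting from $(42,1,43)$ instead of $(N_9,1,N_9+1)$ makes the conditional theorem independent of the Pocklington certificate. You also justify $q_i\nmid R$ explicitly, which the paper only asserts. Finally, you get distinctness by counting prime factors rather than from the paper's increasing terminal primes.
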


\begin{proof}
Start with the ambient terminal port $(R_0,c_0,p_0)=(N_9,1,N_9+1)$.  Here $N_9$ is the primary pseudoperfect number of Theorem~\ref{thm:N9}, and $p_0=N_9+1$ is prime by Theorem~\ref{thm:p10-prime}; hence $c_0p_0-R_0=1$.

Assume that $(R_i,c_i,p_i)$ is an ambient terminal port in the construction.  Lemmas~\ref{lem:five-local} and~\ref{lem:five-real} verify the local and real hypotheses of Hypothesis~\ref{hyp:HLPP5}.  Therefore there are pairwise distinct primes $x_{i,1},\ldots,x_{i,5}$, all greater than $p_i$, such that
\[
        c_i\prod_{j=1}^5x_{i,j}
        -R_i\sum_{j=1}^5\prod_{k\ne j}x_{i,k}=1.
\]
Thus $B_i=\prod_{j=1}^5x_{i,j}$ fills $(R_i,c_i)$, and $R_iB_i$ is primary pseudoperfect by Lemma~\ref{lem:ambient-port}.

Let $A_i=x_{i,1}x_{i,2}x_{i,3}x_{i,4}$ and put $R_{i+1}=R_iA_i$.  Define
\[
        c_{i+1}=c_iA_i-R_i\dd(A_i),
        \qquad
        p_{i+1}=x_{i,5}.
\]
The five-splitting equation is equivalent to $c_{i+1}p_{i+1}-R_{i+1}=1$.  Moreover, since $c_i=R_i-\dd(R_i)$ and the new primes do not divide $R_i$,
\[
        R_{i+1}-\dd(R_{i+1})=c_iA_i-R_i\dd(A_i)=c_{i+1}.
\]
Thus $(R_{i+1},c_{i+1},p_{i+1})$ is again an ambient terminal port.  The terminal primes strictly increase, so the primary pseudoperfect numbers obtained at successive stages are distinct.  Iteration proves the result.
\end{proof}

\begin{remark}
The use of five primes is deliberate.  Three-prime splitting is natural, but it does not give the same simple uniform local-solubility argument.  The theorem above is therefore a reduction to a specific Hardy--Littlewood--Bateman--Horn type prime-points hypothesis, not a consequence of the classical one-variable Bateman--Horn conjecture.
\end{remark}

\section{Scope}\label{sec:scope}

The unconditional results established here are the port formalism, the examples $N_9$ and $N_{10}$, the Pocklington certificate for $N_9+1$, and the discriminant criterion for the final two primes of a filling.  The infinitude result is conditional on Hypothesis~\ref{hyp:HLPP5}.  No unconditional proof of infinitude is claimed, and no uniqueness theorem for the nine-prime-factor example is proved.

The computations for the $H_6$ layer are recorded as computational notes.  They indicate the next finite layer of the search and make the numerical reductions reproducible.  They are not used in the proof of the examples constructed here and do not enter the conditional theorem.

The main unconditional problem suggested by this work is the following.

\begin{problem}\label{prob:H-infinite}
Are there infinitely many squarefree integers $B$, all of whose prime factors exceed $101$, such that
\[
        797B-113322\dd(B)=1?
\]
\end{problem}

A positive answer would imply the infinitude of primary pseudoperfect numbers, because every such $B$ gives the primary pseudoperfect number $113322B$.  The port $H$ is a natural target: it already has the port-primitive fillings
\[
        149\cdot3109
        \qquad\text{and}\qquad
        157\cdot1979\cdot10093\cdot16879.
\]
At present no unconditional mechanism is known that produces infinitely many fillings of $H$ or of any other fixed port.  The finite $H_6$ problem recorded in Appendix~\ref{app:computational-notes} is the next computational layer in this direction.

\appendix

\section{Computational notes on the \texorpdfstring{$H_6$}{H6} layer}\label{app:computational-notes}

The material in this appendix records search reductions and open computational subproblems.  It is included to make the numerical discussion reproducible, but no theorem in the main text depends on these exploratory exclusions.  Statements in this appendix should therefore be read as computational notes unless explicitly promoted to a theorem elsewhere.

The natural next layer for the port $H$ is to ask for six-prime fillings
\begin{equation}\label{eq:H6}
        B=q_1q_2q_3q_4q_5q_6,
        \qquad
        q_i>101,
\end{equation}
with
\[
        797B-113322\dd(B)=1.
\]
Such a filling would give a primary pseudoperfect number with eleven prime factors.

The following subsections analyze the \emph{known inherited channels} arising from the displayed fillings $B_2$, $B_4$, and $B_5$.  This is not asserted to be a complete classification of all inherited $H_6$ channels: another as-yet-undiscovered $H$-filling with fewer than six prime factors would create an additional inherited channel.  The remaining alternative is a port-primitive $H_6$ filling in the sense of Definition~\ref{def:primitive-filling}.

\subsection{Known channel from \texorpdfstring{$B_2=149\cdot3109$}{B2 = 149*3109}}
If $B=B_2C$ and $\omega(B)=6$, then $\omega(C)=4$.  Since
\[
        113322B_2=52495396602=:K_7,
\]
we need
\[
        C-K_7\dd(C)=1,
        \qquad
        \omega(C)=4.
\]
This is a four-prime inheritance problem from the known primary pseudoperfect number $K_7$.

\subsection{Known channel from \texorpdfstring{$B_4$}{B4}}
If $B=B_4C$ and $\omega(B)=6$, then $\omega(C)=2$.  Since
\[
        113322B_4=N_9,
\]
we would need
\[
        C-N_9\dd(C)=1,
        \qquad
        \omega(C)=2.
\]
Equivalently, if $C=pq$, then
\[
        (p-N_9)(q-N_9)=N_9^2+1.
\]
The factorization
\[
        N_9^2+1
        =5\cdot22861\cdot34646497971913\cdot9085080009049858397
\]
leads to eight factor pairs.  None gives two primes $p,q$.  Hence no $H_6$ filling is inherited from $B_4$ by adding two primes.

\subsection{Known channel from \texorpdfstring{$B_5$}{B5}}
The filling
\[
        B_5=B_4(N_9+1)
\]
has five prime factors.  To obtain an $H_6$ filling by adding one prime, the added prime would have to be $N_{10}+1$, but $N_{10}+1$ is composite.  Hence no $H_6$ filling is inherited from $B_5$.

\subsection{Port-primitive and unknown-channel \texorpdfstring{$H_6$}{H6} fillings}
After the known channels above have been analyzed, the remaining targets are:
\begin{enumerate}[label=(\alph*)]
    \item the four-prime inheritance problem from $K_7=52495396602$ in the known $B_2$ channel;
    \item port-primitive six-prime fillings of $H$;
    \item any inherited channel arising from a smaller $H$-filling not yet found or not yet excluded.
\end{enumerate}
This wording is deliberately cautious: unless one proves that the displayed $B_2$, $B_4$, and $B_5$ exhaust all smaller $H$-fillings, the inherited-channel list above should be read as a list of known channels, not as a complete classification.

\section{Exploratory computational status for \texorpdfstring{$H_6$}{H6}}

This section records exploratory computations.  They are not used in the proofs above and are not presented as certified theorems.  The six-prime filling problem initially has $111$ possible first primes $q_1$, ranging from $149$ to $829$ after the elementary reciprocal-capacity pruning.  The present exploratory computation reduces the possible first prime to
\[
        q_1\le409.
\]
For the four largest remaining values
\[
        q_1\in\{389,397,401,409\},
\]
we enumerated all admissible four-prime prefixes and checked all $t\le1000$ cases in the discriminant~\eqref{eq:D-t}.  No square discriminant occurred.  The current status is summarized in Table~\ref{tab:H6-exploratory}.

\begin{center}
\begin{tabular}{rrrrrr}
\toprule
$q_1$ & four-prefixes & $T<0$ & $0\le T\le1000$ & checked $t$ & square hits \\
\midrule
409 & 1,156,527 & 1,047,472 & 98,398 & 6,043,695 & 0 \\
401 & 1,209,161 & 1,157,101 & 48,832 & 1,631,329 & 0 \\
397 & 1,215,077 & 1,194,620 & 19,190 & 609,648 & 0 \\
389 & 1,527,289 & 1,419,120 & 99,337 & 4,884,670 & 0 \\
\bottomrule
\end{tabular}
\captionof{table}{Exploratory search data for several high remaining first-prime branches of $H_6$.}
\label{tab:H6-exploratory}
\end{center}

These data do not exclude the branches completely.  They show only that any completion in the displayed branches must arise from a prefix with $T>1000$.

\section{Independent verification appendix}

This appendix separates the proof-critical arithmetic checks from the exploratory search data.  The main equalities in the paper are directly checkable from the displayed factorizations.  For machine verification, the source distribution also contains a companion archive with the following files:
\begin{itemize}
    \item \texttt{verify\_main\_claims.py}, checking the PPN identities, the Pocklington certificate for $p_{10}=N_9+1$, and the displayed factorizations;
    \item \texttt{pocklington\_certificates.json}, a machine-readable list of top-level Pocklington data;
    \item \texttt{search\_h6\_exploratory.py}, containing helper routines for the exploratory $H_6$ computations.
\end{itemize}
A permanent public deposit should accompany any circulated version of the preprint.  The archive used for this revision is \texttt{ppn\_verification\_archive.tar.gz}, with SHA256 checksum \texttt{\seqsplit{4d384f61d03eeb2b9e9f67252f1df22e70df55995c4e14eec5951285edde743b}}.  Until a permanent deposit is made, the proof-critical certificate data are summarized in Table~\ref{tab:cert-data} below.

\subsection{Top-level factorizations}
The factorizations used in the main text are:
\[
        p_{10}-1=N_9
        =2\cdot3\cdot11\cdot17\cdot101\cdot157\cdot1979\cdot10093\cdot16879,
\]
\[
\begin{aligned}
        N_{10}+1
        &=7\cdot37\cdot73\cdot407221\cdot2746750419901\cdot1701301706648581,
\end{aligned}
\]
\[
        N_9^2+1
        =5\cdot22861\cdot34646497971913\cdot9085080009049858397,
\]
\[
\begin{aligned}
        N_{10}^2+1 &=21807157\cdot480382349\cdot Q,\\
        Q &= {\scriptstyle 123572138719194583969192220095883252267503088389616114960309}.
\end{aligned}
\]
The verification archive checks that each factor displayed above is prime and that the products multiply to the asserted integers.  The JSON certificate file contains a recursive Pocklington-style certificate tree for these primes and the auxiliary primes arising in their $p-1$ factorizations.

\subsection{Top-level Pocklington data}
\begin{center}
\scriptsize
\resizebox{\textwidth}{!}{%
\begin{tabular}{p{0.30\textwidth}p{0.58\textwidth}r}
\toprule
prime $p$ & factorization of $p-1$ used at the top level & base \\
\midrule
\texttt{5998279018951962403} & $2\cdot3\cdot11\cdot17\cdot101\cdot157\cdot1979\cdot10093\cdot16879$ & $3$ \\
\texttt{407221} & $2^2\cdot3\cdot5\cdot11\cdot617$ & $2$ \\
\texttt{2746750419901} & $2^2\cdot3^2\cdot5^2\cdot23\cdot769\cdot172553$ & $7$ \\
\texttt{1701301706648581} & $2^2\cdot3\cdot5\cdot28355028444143$ & $6$ \\
\texttt{34646497971913} & $2^3\cdot3^2\cdot53\cdot373\cdot24341209$ & $5$ \\
\texttt{9085080009049858397} & $2^2\cdot11\cdot206479091114769509$ & $2$ \\
\texttt{21807157} & $2^2\cdot3\cdot7^2\cdot37087$ & $2$ \\
\texttt{480382349} & $2^2\cdot120095587$ & $2$ \\
\texttt{\seqsplit{123572138719194583969192220095883252267503088389616114960309}} & $2^2\cdot59\cdot$\texttt{\seqsplit{523610757284722813428780593626623950286030035549220826103}} & $2$ \\
\bottomrule
\end{tabular}%
}
\captionof{table}{Top-level Pocklington data for the primes used in the displayed factorizations.  The recursive primality of the auxiliary factors in the middle column is checked by the verification script.}
\label{tab:cert-data}
\end{center}

\subsection{Pocklington certificate for \texorpdfstring{$p_{10}$}{p10}}
The certificate used in Theorem~\ref{thm:p10-prime} is short enough to display.  With
\[
        p_{10}=5998279018951962403,
        \qquad
        a=3,
\]
one has
\[
        a^{p_{10}-1}\equiv1\pmod {p_{10}},
\]
and for every prime divisor
\[
        q\in\{2,3,11,17,101,157,1979,10093,16879\}
\]
of $p_{10}-1$, one has
\[
        \gcd\bigl(a^{(p_{10}-1)/q}-1,p_{10}\bigr)=1.
\]
The factorization of $p_{10}-1$ is complete, so this proves the primality of $p_{10}$ by Pocklington's criterion.

\subsection{Verification script excerpt}
The following short Python fragment verifies the central arithmetic identities and the Pocklington certificate used here.  The companion archive contains the longer recursive certificate generator.

\begin{lstlisting}[style=pythonsmall]
from math import prod, gcd
from fractions import Fraction
from sympy import isprime, factorint

P9 = [2,3,11,17,101,157,1979,10093,16879]
N9 = prod(P9)

assert N9 == 5998279018951962402
assert all(isprime(p) for p in P9)
assert 1 + sum(N9 // p for p in P9) == N9
assert Fraction(1, N9) + sum(Fraction(1, p) for p in P9) == 1

p10 = N9 + 1
assert p10 == 5998279018951962403
assert pow(3, p10 - 1, p10) == 1
for q in P9:
    assert gcd(pow(3, (p10 - 1)//q, p10) - 1, p10) == 1
assert isprime(p10)

N10 = N9 * p10
P10 = P9 + [p10]

assert N10 == 35979351189199316534587473905773572006
assert all(isprime(p) for p in P10)
assert 1 + sum(N10 // p for p in P10) == N10
assert Fraction(1, N10) + sum(Fraction(1, p) for p in P10) == 1

print(factorint(N10 + 1))
print(factorint(N10*N10 + 1))
\end{lstlisting}

\end{document}